\newtheorem{theorem}{Theorem}[section]
\newtheorem*{theorem*}{Main Theorem}
\newtheorem{lemma}[theorem]{Lemma}
\newtheorem{proposition}[theorem]{Proposition}
\newtheorem{corollary}[theorem]{Corollary}
\theoremstyle{definition}
\newtheorem{example}{Example}[section]
\newtheorem{remark}{Remark}[section]
\newtheorem*{warning*}{Warning}
\newtheorem{definition}[theorem]{Definition}
\newtheorem{assumption}[theorem]{Assumption}
\def\@tocline#1#2#3#4#5#6#7{\relax
  \ifnum #1>\c@tocdepth 
  \else
    \par \addpenalty\@secpenalty\addvspace{#2}%
    \begingroup \hyphenpenalty\@M
    \@ifempty{#4}{%
      \@tempdima\csname r@tocindent\number#1\endcsname\relax
    }{%
      \@tempdima#4\relax
    }%
    \parindent\z@ \leftskip#3\relax \advance\leftskip\@tempdima\relax
    \rightskip\@pnumwidth plus4em \parfillskip-\@pnumwidth
    #5\leavevmode\hskip-\@tempdima
      \ifcase #1
       \or\or \hskip 1em \or \hskip 2em \else \hskip 3em \fi%
      #6\nobreak\relax
    \dotfill\hbox to\@pnumwidth{\@tocpagenum{#7}}\par
    \nobreak
    \endgroup
  \fi}
\def\R{{\mathbb R}}
\def\N{{\mathbb N}}
\def\C{{\mathbb C}}
\def\Z{{\mathbb Z}}
\def\H{{\mathbb H}}
\def\cU{{\mathcal U}}
\def\sfera{{\mathbb S}}
\DeclareMathOperator{\pv}{\wedge \mspace{-9mu}_* \ }
\renewcommand{\Im}{\mathsf{Im}}
\def\Aut{\operatorname{Aut}}
\newcommand{\vecpart}[1]{\underline{#1}}
\newcommand{\vecnorm}[1]{\underline{#1}^2}
\title{The $*$-exponential as a covering map}
\author[A. Altavilla]{Amedeo Altavilla}\address{Altavilla Amedeo: Dipartimento di Matematica, Universit\`a degli Studi di Bari ``Aldo Moro'', via Edoardo Orabona, 4, 70125,
Bari, Italy.}\email{amedeo.altavilla@uniba.it}
\author[S. Mongodi]{Samuele Mongodi}\address{Mongodi Samuele: Dipartimento di Matematica e Applicazioni, Universit\`a degli Studi di Milano-Bicocca, via Roberto Cozzi, 55, 20125, Milano, Italy.}\email{samuele.mongodi@unimib.it}
\thanks{Partially supported by PRIN 2022MWPMAB - ``Interactions between Geometric Structures and Function Theories''
GNSAGA of INdAM and by the INdAM project ``Teoria delle funzioni ipercomplesse e applicazioni''.}
\subjclass[2020]{Primary 30G35, 30C25; secondary 30B50, 33B10, 58K10, 32A10}
\keywords{Slice-regular functions, quaternionic exponential, quaternionic logarithm, Baker-Campbell-Hausdorff, covering maps, monodromy}
\begin{document}
\begin{abstract}
We employ tools from complex analysis to construct the $*$-logarithm of a quaternionic slice regular function. Our approach enables us to achieve three main objectives: we compute the monodromy associated with the $*$-exponential;
we establish sufficient conditions for the $*$-product of two $*$-exponentials to also be a $*$-exponential;
we calculate the slice derivative of the $*$-exponential of a regular function.
\end{abstract}

\maketitle

\section{Introduction}

One of the fundamental ideas in the early stages of real analytic geometry and in the study of flat real analytic CR manifolds is the complexification: real analyticity ensures that most of the features we are interested in will be reflected in the complexification, at least on a formal and algebraic level.

In this respect, the theory of slice regular functions benefits no less, if not more, from this idea. As it is known, slice regular functions on a symmetric open domain $U\subseteq \H$ can be viewed as a special family of holomorphic functions (namely \textit{stem functions}) from an open domain in $\C$ to $\C\otimes\H$ (i.e. the \emph{complexification} of $\H$ as an associative algebra with unity).

In \cite{Mongodi:HolSR}, this idea was employed to account for the many similarities between the theory of slice regular functions and the theory of holomorphic functions of one complex variable; this argument was pushed further in \cite{Mongodi:AssAlg} to describe the link between the complex geometry of the set of square roots of $-1$ of an associative algebra and the space of slice regular functions.

On the other hand, the purely algebraic properties resulting from the structure of $\C\otimes\H$ were used to a great extent in~\cite{AltavillaPAMS,AltavillaAMPA,AltavillaLAA,altavillaLOG}, where many properties of the $*$-exponential
were studied, the structure of the $*$-product was better understood, and the problem of the existence of a $*$-logarithm
was initiated.

Building on the same intuition and exploiting more thoroughly the equality of the analytic expressions of algebraic operations of $\H$ and $\C\otimes\H$, in \cite{AM:powercover} we showed how the problem of finding $*$-roots of a slice regular function can be translated into a problem of lifting functions through a holomorphic covering map. The number and the structure of such $*$-roots were then linked to the group of deck transformations of the covering map.

Again, we would like to emphasize that this is possible because the analytic expressions of the multiplication in $\H$ and in $\C\otimes\H$ in terms of the coordinates with respect to some basis of $\H$ and the corresponding complexified basis of $\C\otimes\H$ are the same, reflecting the fact that a real analytic function on, say, the real line has a unique extension to the complexification of the real line given by a power series with the same coefficients.

We present here another instantiation of this consideration: we treat the case of $*$-logarithms by considering the map $\exp:\H\to\H$ and lifting it to the complexification, to a map with the same analytic expression. The study of the local inverses of $\exp$ again becomes a problem in complex analytic covering maps, from whose solution we also recover what we already proved in the case of $*$-roots. Using a geometric approach we will see that under natural topological hypotheses the exponential map in $\C\otimes\H$ is a covering map (see Theorem~\ref{vareps}) and we will be able to write down its monodromy. 
Then, thanks to the standard relation between holomorphic stem functions and slice regular functions, 
given a never-vanishing slice regular function $f:U\to\H$ such that its ``vector part'' is never-vanishing, we will be able
to construct a $2$-parameter family of $*$-logarithms ($1$-parameter family if $U\cap\R\neq \emptyset$), see
Corollaries~\ref{maincor} and~\ref{maincor2} for the results and Remark~\ref{monod} for the explicit description of
the monodromy.

Such a study extends what is already contained in~\cite{altavillaLOG,GPV1,GPV2} by showing the geometric nature of the many problems encountered in the search for a good notion of logarithm in the non-commutative setting.

As already mentioned, the proof of many results contained in the present paper follows topological strategies, which then produce natural hypotheses and conditions, simplifying many proofs contained in the aforementioned papers. On the other hand, since it is not the specific aim of this work, we will only give a glimpse of how the remaining residual cases should be treated, i.e. how some of the hypotheses could be relaxed.

In an effort to highlight the impact that this simple idea can produce, we analyze the problem of when a product of exponential is an exponential itself; the question for quaternions is easily settled by using a simplified version of the Baker-Campbell-Hausdorff formula (or, if one interprets quaternions as rotations, by a standard application of Rodrigues' formula).
Once the problem is \emph{analytically} solved for quaternions, we formally consider the same solution for the same problem in $\C\otimes\H$, where stem functions take their values. This gives us a solution to the same problem at the level of stem functions, hence for slice regular functions.
The same idea is used to compute the slice derivative of the $*$-exponential of a slice regular function. Even though this
computation is quite natural, it has not been implemented yet, possibly due to the lack of a strategy like the one we use here.
In these last two tasks, we will use a simple formula inspired by standard linear algebra, which allows, given a generic slice regular
function $f$, to write any other function $g$ as a sum of a component in the ``direction'' of $f$ and another ``orthogonal'' part.
This will allow us to write much simpler formulas and to identify possible future generalizations.

\section{Preliminaries}
\subsection{Algebraic structures of $\H$ and of $\H\otimes\C$}
In this paper we will deal with many different imaginary units, not only those contained in the space of quaternions, 
but also with others coming from different algebras. Starting from complex numbers, the symbol `$\imath$' will denote
the standard imaginary unit in $\C$ (and hence will be used when working in $\C^{N}$, $N\ge1$). 
The symbol `$i$' will denote the first imaginary unit in the definition of
the space $\H$ of quaternions:
$$
\H:=\{q=q_{0}+q_{1}i+q_{2}j+q_{3}k\,|\, q_{\ell}\in\R,\,\ell=0,1,2,3,\,i^{2}=j^{2}=k^{2}=-1,\,ij=k=-ji\}.
$$
We will make use of the standard conjugation in $\H$ denoted by the superscript $c$:
$$
q=q_{0}+q_{1}i+q_{2}j+q_{3}k\mapsto q^{c}=q_{0}-(q_{1}i+q_{2}j+q_{3}k).
$$
Using this conjugation, given any quaternion $q$, it is possible to define its scalar and vector parts as follows
$$
q_{0}=\frac{q+q^{c}}{2},\quad q_{v}=\frac{q-q^{c}}{2},
$$
so that $q=q_{0}+q_{v}$. Obviously, if $q$ is represented in the form $q=q_{0}+q_{1}i+q_{2}j+q_{3}k$, then $q_{0}$ is the scalar part of $q$ and
$q_{v}=q_{1}i+q_{2}j+q_{3}k$. 
Using this representation, we can express the product of two quaternions $q=q_{0}+q_{v}$ and $p=p_{0}+p_{v}$ in a more understandable way:
\begin{equation}\label{prod1}
qp=q_{0}p_{0}-\langle q_{v},p_{v}\rangle+q_{0}p_{v}+p_{0}q_{v}+q_{v}\wedge p_{v},
\end{equation}
where $\langle \cdot,\cdot\cdot\rangle$ and $\wedge$ denote the standard Euclidean and cross products.
In particular, the square norm of $q$ can be computed as $|q|^{2}=qq^{c}$ and we have that $q_{v}^{2}=-|q_{v}|^{2}$.
 
Whenever $q_{v}\neq0$, we are able to represent $q$ in another convenient form:
$$
q=\alpha+I\beta,
$$
where $\alpha=q_{0}$, $I=\frac{q_{v}}{|q_{v}|}$ and $\beta=|q_{v}|$.
In particular, if we denote the set of imaginary units as follows
$$
\sfera:=\{I\in\H\,|\,I^{2}=-1\}=\{\alpha_{1}i+\alpha_{2}j+\alpha_{3}k\,|\,\alpha_{1}^{2}+\alpha_{2}^{2}+\alpha_{3}^{2}=1\},
$$
and we denote by $\C_{I}=span(1,I)=\{\alpha+I\beta\,|\,\alpha,\beta\in\R\}$ the complex plane generated by $1$ and $I$, we have that
$$
\H=\bigcup_{I\in\sfera}\C_{I}.
$$
This last representation comes in handy when working with slice functions, and in order to do that we need to talk about
we need to discuss the complexification of $\H$ (in particular, we will follow the approach of~\cite{GP:AltAlg}).

The symbol `$\sqrt{-1}$' will denote the complex imaginary unit defining the complexification of $\H$, i.e.
$$
\C\otimes\H:=\{q+\sqrt{-1}p\,|\,q,p\in\H\}.
$$
The algebraic structure of $\C\otimes \H$ is defined in the usual way: if $q_{1}+\sqrt{-1}p_{1},q_{2}+\sqrt{-1}p_{2}\in\C\otimes\H$,
then:
$$
(q_{1}+\sqrt{-1}p_{1})(q_{2}+\sqrt{-1}p_{2})=q_{1}q_{2}-p_{1}p_{2}+\sqrt{-1}(q_{1}p_{2}+p_{1}q_{2}).
$$

By fixing a (orthogonal) basis of $\H$ containing $1$ and by writing any quaternion in its $4$ real coordinates
we get a biholomorphism between $\C\otimes \H$ and $\C^{4}$: if $q=q_{0}+q_{1}i+q_{2}j+q_{3}k$ and
$p=p_{0}+p_{1}i+p_{2}j+p_{3}k$, then we define $\phi:\C\otimes\H\to\C^{4}$ as
$$
\phi(q+\sqrt{-1}p)=(q_{0}+\imath p_{0},q_{1}+\imath p_{1}, q_{2}+\imath p_{2},q_{3}+\imath p_{3}).
$$
In particular, this biholomorphism induces an algebraic structure on $\C^{4}$ that is defined exactly as that of $\H$:
$$\C\otimes\H=\{z=z_{0}+z_{1}i+z_{2}j+z_{3}k\,|\,z_{\ell}\in\C\,,\ell=0,1,2,3,\,i^{2}=j^{2}=k^{2}=-1,\,ij=k=-ji\}.$$

In $\C\otimes\H$ it is possible to define two commuting conjugations:
\begin{align*}
q+\sqrt{-1}p\mapsto & (q+\sqrt{-1}p)^{c}=q^{c}+\sqrt{-1}p^{c},\\
q+\sqrt{-1}p\mapsto & \overline{q+\sqrt{-1}p}=q-\sqrt{-1}p.
\end{align*}
If we work in $\C^{4}$, these two conjugations translates as follows
\begin{align*}
z=z_{0}+z_{1}i+z_{2}j+z_{3}k\mapsto & z^{c}=z_{0}-(z_{1}i+z_{2}j+z_{3}k),\\
z=z_{0}+z_{1}i+z_{2}j+z_{3}k\mapsto & \overline{z}=\bar{z}_{0}+\bar{z}_{1}i+\bar{z}_{2}j+\bar{z}_{3}k.
\end{align*}
Exactly as before, we can define the ``scalar'' and ``vector'' part of $z\in\C\otimes\H$ as
$$
z_{0}=\frac{z+z^{c}}{2},\quad \vecpart{z}=\frac{z-z^{c}}{2},
$$

Within this language, the product of two elements $z,w\in\C\otimes\H$ can be written formally as in Formula~\ref{prod1}:
$$
zw=z_{0}w_{0}-\langle \vecpart{z},\vecpart{w}\rangle+z_{0}\vecpart{w}+w_{0}\vecpart{z}+\vecpart{z}\wedge\vecpart{w},
$$
where $\langle\cdot,\cdot\cdot\rangle$ and $\wedge$ are the formal generalization of the Euclidean and cross product.
In particular if $z=z_{0}+\vecpart{z}=z_{0}+z_{1}i+z_{2}j+z_{3}k$, setting 
$$\vecnorm{z}=z_{1}^{2}+z_{2}^{2}+z_{3}^{2},$$
we have that $zz^{c}=\langle z,z\rangle=z_{0}^{2}+\vecnorm{z}\in\C$ and it is a real number only if the four components
of $z$ are real numbers, i.e. only if $z\in\H$.
However, since the product in $\C$ is commutative, for any $z,w\in\C\otimes \H$ we have
\begin{equation}\label{formula1}
(zw)(zw)^{c}=zww^{c}z^{c}=(zz^{c})(ww^{c}).
\end{equation}
If $z$ is such that $zz^{c}\neq 0$, then $z\neq 0$, but unfortunately $\C\otimes\H$ contains zero divisors. So, in particular, 
there are $z\neq 0$ such that $zz^{c}=0$.
\subsection{Stem functions, slice functions and regularity}
We are now ready to introduce and discuss slice functions. As already said, we will rely on the approach of
stem functions developed in~\cite{GP:AltAlg} and in subsequent works by the same authors. We also refer to~\cite{AM:powercover} to deepen our specific point of view.
We start with the following definition.
\begin{definition}
Let $\mathcal{U}\subset\C$ be such that $\overline{\mathcal{U}}=\mathcal{U}$. A function $F:\mathcal{U}\to\C\otimes\H$
is said to be a \textit{stem function} if, for any $z\in\mathcal{U}$, we have $F(\bar z)=\overline{F(z)}$.
\end{definition}
If we write $F:\mathcal{U}\to\C\otimes\H$ as $F(z)=F_{ev}(z)+\sqrt{-1}F_{od}(z)$, then the condition $F(\bar z)=\overline{F(z)}$
is reflected in the following two equalities $F_{ev}(\bar z)=F_{ev}(z)$ and $F_{od}(\bar z)=-F_{od}(z)$. If, instead, we read $F$ as a function 
taking values in $\C^{4}$, $F(z)=(F_{0}(z),F_{1}(z),F_{2}(z),F_{3}(z))$, then the stem condition must be satisfied by 
all four components, i.e. $F_{\ell}(\bar z)=\overline{F_{\ell}(z)}$, for $\ell=0,1,2,3$.

\begin{definition}
Let $U\subset \H$ be such that if $q=\alpha+I\beta\in U$ then $\alpha+J\beta\in U$, for any $J\in\sfera$
and let $\mathcal{U}=\{\alpha+\imath\beta\,|\,\alpha+I\beta\in U\}$.
A function $f:U\to\H$ is said to be a \textit{slice function} if there exists a stem function $F=F_{ev}+\sqrt{-1}F_{od}:\mathcal{U}\to\C\otimes\H$
such that $f(\alpha+I\beta)=F_{ev}(\alpha+\imath\beta)+IF_{od}(\alpha+\sqrt{-1}\beta)$; in this case we will write $f=\mathcal{I}(F)$
and we will say that $f$ is induced by $F$.

If $U$ is a domain and $F$ is a holomorphic function, then $f$ is said to be a \textit{slice regular function}.
\end{definition}

The definition of stem functions guarantees the well-definition of slice functions: in fact, since $F(\bar z)=\overline{F(z)}$,
then the value of $f$ at $\alpha+(-I)(-\beta)$ is not different from that of $f$ at $\alpha+I\beta$. 
Examples of slice regular functions are polynomials and converging power series in the quaternionic variable $q$ with right quaternionic coefficients. 

The main property of slice
functions is the so-called \textit{Representation Formula} contained in the following statement (see~\cite[Theorem 1.16]{GSS:RegFunc}). It essentially says that
a slice function can be recovered from its values on two different semislices $\C_{I}^{+}$ and $\C_{K}^{+}$, where the apex `$+$'
indicates the upper half plane.
\begin{theorem}[Representation Formula]
Let $f:U\to\H$ be a slice function and let $J,K\in\sfera$ be such that $J\neq K$. Then, for every $\alpha+I\beta\in U$ the following
formula holds
$$
f(\alpha+I\beta)=(I-K)((J-K)^{-1}f(\alpha+J\beta))-(I-J)((J-K)^{-1}f(\alpha+K\beta)).
$$
\end{theorem}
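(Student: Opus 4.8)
The plan is to prove the Representation Formula by a direct computation: both sides are slice functions of the variable $\alpha + I\beta$, so it suffices to verify the identity pointwise, using the defining relation $f(\alpha+I\beta) = F_{ev}(\alpha+\imath\beta) + I\, F_{od}(\alpha+\imath\beta)$ together with the parity conditions $F_{ev}(\bar z)=F_{ev}(z)$ and $F_{od}(\bar z)=-F_{od}(z)$. Fix $\alpha+I\beta\in U$; we may assume $\beta\neq 0$ (the case $\beta=0$ is trivial since then $f(\alpha)=F_{ev}(\alpha)$ and the right-hand side collapses to $F_{ev}(\alpha)$ as well, because the $J$- and $K$-terms each reduce to $F_{ev}(\alpha)$ and the coefficients $(I-K)(J-K)^{-1}-(I-J)(J-K)^{-1}=(J-K)(J-K)^{-1}=1$). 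Abbreviate $a := F_{ev}(\alpha+\imath\beta)$ and $b := F_{od}(\alpha+\imath\beta)$, so that $f(\alpha+J\beta) = a + Jb$ and $f(\alpha+K\beta) = a + Kb$ for any imaginary units $J,K$.

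First I would substitute these expressions into the right-hand side, obtaining
$$
(I-K)(J-K)^{-1}(a+Jb) - (I-J)(J-K)^{-1}(a+Kb).
$$
The terms containing $a$ combine, as in the $\beta=0$ case, to $\bigl[(I-K)(J-K)^{-1} - (I-J)(J-K)^{-1}\bigr] a = (J-K)(J-K)^{-1} a = a$. So the claim reduces to showing that the remaining terms give $Ib$, i.e.
$$
(I-K)(J-K)^{-1}Jb - (I-J)(J-K)^{-1}Kb = Ib.
$$
Here the key point is that $J-K$ is a nonzero quaternion with zero scalar part only when $J=-K$; in general $J-K = (J_0-K_0) + (J_v - K_v)$ has scalar part zero since $J,K\in\sfera$, but $(J-K)^{-1} = (J-K)^c/|J-K|^2 = -(J-K)/|J-K|^2$ is well-defined precisely because $J\neq K$. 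The nontrivial algebraic identity to check is therefore
$$
(I-K)(J-K)^{-1}J - (I-J)(J-K)^{-1}K = I,
$$
which, after multiplying on the right by $(J-K)$, becomes $(I-K)J - (I-J)K = I(J-K)$, i.e. $IJ - KJ - IK + JK = IJ - IK$, i.e. $JK - KJ = 0$. Wait — that is false in general, so the identity must be read with the factor $(J-K)^{-1}$ sitting in the middle and not commuting; the correct manipulation is to \emph{not} clear the middle inverse but instead expand $(J-K)^{-1}$ explicitly.

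The clean route, and the one I would actually carry out, is to verify the two coefficient identities that result from matching the ``real part'' (coefficient of $a$, equivalently of $1$ when $b$-terms vanish) and the ``$b$-part'': writing $\lambda := (J-K)^{-1}$, one needs $(I-K)\lambda - (I-J)\lambda = 1$ and $(I-K)\lambda J - (I-J)\lambda K = I$. The first is immediate since $(I-K)\lambda-(I-J)\lambda = (J-K)\lambda = 1$. For the second, I would substitute $\lambda = -(J-K)/|J-K|^2$ and expand everything in terms of the Euclidean and cross products via Formula~\eqref{prod1}, using $J^2=K^2=-1$, $\langle J,J\rangle=\langle K,K\rangle = 1$, and keeping track of the mixed terms $\langle J,K\rangle$ and $J\wedge K$; after collecting, the $|J-K|^2 = 2 - 2\langle J,K\rangle$ in the denominator cancels against the numerator and what remains is exactly $I$. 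The main obstacle is purely bookkeeping: the cross-product terms $I\wedge J$, $I\wedge K$, $J\wedge K$ and the scalar terms $\langle I,J\rangle$, $\langle I,K\rangle$, $\langle J,K\rangle$ proliferate, and one must use the vector identity $u\wedge(v\wedge w) = \langle u,w\rangle v - \langle u,v\rangle w$ (equivalently, the quaternionic relation $uv - vu = 2\, u\wedge v$ for imaginary $u,v$) to reduce the resulting expression; there is no conceptual difficulty, only the need to organize the expansion carefully. An alternative that avoids most of this is to first reduce to the case $K = -J$ by an intermediate application of the formula, where it reads $f(\alpha+I\beta) = \tfrac12(f(\alpha+J\beta)+f(\alpha-J\beta)) - \tfrac{I}{2}J(f(\alpha+J\beta)-f(\alpha-J\beta))$; this special case is immediate from $f(\alpha\pm J\beta) = a \pm Jb$, and then deriving the general two-unit version from this one is again a short linear-algebra computation in $\C_I$-coordinates.
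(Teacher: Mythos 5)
The paper does not prove this statement: it is quoted verbatim from the literature (Theorem 1.16 of the cited book on regular functions), so there is no in-paper argument to compare against. Your proposal is nevertheless a correct and essentially standard proof. The reduction to the two coefficient identities is right, and the first one, $(I-K)\lambda-(I-J)\lambda=(J-K)\lambda=1$ with $\lambda=(J-K)^{-1}$, is handled correctly. The only weak point is that you leave the second identity, $(I-K)\lambda J-(I-J)\lambda K=I$, to an unexecuted "bookkeeping" expansion in Euclidean and cross products; that would work, but it is far heavier than necessary and risks sign errors. The clean verification is two lines: expand and regroup as
$$
(I-K)\lambda J-(I-J)\lambda K \;=\; I\lambda(J-K)\;+\;\bigl(J\lambda K-K\lambda J\bigr)\;=\;I\;+\;\bigl(J\lambda K-K\lambda J\bigr),
$$
and then note that $J(J-K)K=J^2K-JK^2=-K+J=J-K$ and likewise $K(J-K)J=KJ^2-K^2J=J-K$, so that $J\lambda K=K\lambda J$ (both equal $\lambda(J-K)\lambda=\lambda$) and the bracket vanishes. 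With that two-line computation inserted, your argument is complete; your treatment of the $a$-terms, of the $\beta=0$ case, and the alternative reduction to $K=-J$ are all sound.
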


It is well known that the pointwise product of two slice functions does not preserve regularity, however, the pointwise product
of two stem functions is a stem functions, therefore it is natural to introduce a new notion of product as follows.
\begin{definition}
Let $f=\mathcal{I}(F)$ and $g=\mathcal{I}(G)$ be two stem functions defined on the same domain $U$. The 
$*$\textit{-product} of $f$ and $g$ is defined as the slice function 
$$
f*g=\mathcal{I}(FG):U\to\H.$$
\end{definition}
Since the $*$-product is defined from the pointwise product in a non-commutative algebra (namely in $\C\otimes\H$),
it is non-commutative itself. However, if we consider a slice function $f=\mathcal{I}(F_{ev}+\sqrt{-1}F_{od}):U\to\H$, such that
$F_{ev}$ and $F_{od}$ take only real values, then $F$ is a legit complex function of one complex variable and, for any other
slice function $g$ defined on $U$, we have that
$$f*g=fg=g*f.$$
A function $f=\mathcal{I}(F)$ with the above property is said to be \textit{slice preserving}. In fact, as $F_{ev}$ and 
$F_{od}$ are real valued, then, for any $q=\alpha+I\beta\in U$, the element $f(q)$ belongs to the same slice $\C_{I}$
of $q$. Written as a complex curve in $\C^{4}$ the stem function $F$ of a slice preserving function
takes the following form
$$F(z)=(F_{ev}+\imath F_{od},0,0,0)=(F_{0},0,0,0).$$

At this stage we can apply all the formalism and properties described in the previous part of this Section and obtain that,
if $f=\mathcal{I}(F)$ and $g=\mathcal{I}(G)$, $F=F_{0}+F_{1}i+F_{2}j+F_{3}k$ and $G=G_{0}+G_{1}i+G_{2}j+G_{3}k$, $f_{\ell}=\mathcal{I}(F_{\ell})$ and $g_{\ell}=\mathcal{I}(G_{\ell})$ for $\ell=0,1,2,3$, $f_{v}=\mathcal{I}((F+F^{c})/2)$ and $g_{v}=\mathcal{I}((G+G^{c})/2)$, then
$$
f*g=f_{0}g_{0}-\langle f_{v},g_{v}\rangle_{*}+f_{0}g_{v}+g_{0}f_{v}+f_{v}\pv g_{v},
$$
where $\langle f_{v},g_{v}\rangle_{*}=f_{1}g_{1}+f_{2}g_{2}+f_{3}g_{3}$ and $f_{v}\pv g_{v}=(f_{2}g_{3}-f_{3}g_{2})i+(f_{3}g_{1}-f_{1}g_{3})j+(f_{1}g_{2}-f_{2}g_{1})k$ and, of course, all $f_{\ell}$ and $g_{\ell'}$ are slice preserving functions. 
These last two operators can be defined in an intrinsic way by means of the so-called regular conjugation:
given a slice function $f=\mathcal{I}(F):U\to\H$, we define its \textit{regular conjugate} as the function $f^{c}:U\to\H$
defined as $f^{c}=\mathcal{I}(F^{c})$. Then, if $g:U\to\H$ is another slice function, we have that
$$
\langle f,g\rangle_{*}=\frac{f*g^{c}+g*f^{c}}{2},\qquad f\pv g=f_{v}\pv g_{v}=\frac{f*g-g*f}{2}=\frac{[f,g]}{2}.
$$
This representation of the product highlights how many algebraic features of slice functions directly come from those
of quaternions (or of quaternionic curves).
For instance, two non slice preserving functions $f$ and $g$ commute if and only if $f_{v}\pv g_{v}\equiv 0$ if and only
if there exist two slice preserving functions $\alpha$ and $\beta$ not both identically zero, such that
$\alpha f_{v}+\beta g_{v}\equiv 0$ (see e.g.~\cite[Proposition 2.10]{AltavillaPAMS}). A particular instance of this
phenomenon is when $f$ and $g$ are both $\C_{I}$\textit{-preserving} for some $I\in\sfera$, i.e., for any $q=\alpha+I\beta$
in the domain of $f$ and $g$, we have that $f(q), g(q)\in\C_{I}$. Slice preserving functions are $\C_{I}$-preserving for
any $I\in\sfera$.
Keeping this parallelism between the algebraic features of $\H$ and those of $\C\otimes\H$, we notice that
the role of the Euclidean norm of $\R^{4}\simeq\H$ is taken here by the so-called \textit{symmetrization} of $f$:
given a slice function $f=f_{0}+f_{1}i+f_{2}j+f_{3}k$, its symmetrization is the function $f^{s}:=\mathcal{I}(FF^{c})=f_{0}^{2}+f_{1}^{2}+f_{2}^{2}+f_{3}^{2}$. The symmetrization of $f$ has an important role in the study of zeroes of $f$ (see~\cite[Chapter 3]{GSS:RegFunc}).

We now recall an important example of slice preserving regular function.
\begin{example}
Let $\mathcal{J}:\H\setminus\R\to\H$ be the function such that 
$$\mathcal{J}(q)=\frac{q_{v}}{|q_{v}|}.$$
This is clearly a slice preserving function and 	it is constant on each semislice $\C_{I}^{+}$.
In fact, if $q=\alpha+I\beta\in\H\setminus\R$, and $\beta>0$, then $\mathcal{J}(q)=I$.
This particular function plays an important role in the theory of slice regular functions with
domain that does not intersect the real axis. In fact, thanks to the fact that $\mathcal{J}*\mathcal{J}=\mathcal{J}^{2}\equiv -1$,
we are able to define slice regular idempotent functions (and hence zero divisors) as
$$
\ell_{+},\ell_{-}:\H\setminus\R\to\H,\qquad \ell_{\pm}=\frac{1\mp\mathcal{J}i}{2}.
$$
For a complete study of these functions see~\cite{AltavillaLAA}.
\end{example}

Starting from the previous example it is worth noticing that, given any slice regular function $f=f_{0}+f_{v}$,
such that $f_{v}\not\equiv 0$ and $\sqrt{f_{v}^{s}}$ is well defined (e.g. when $f_{v}^{s}$ is never vanishing see~\cite[Corollary 3.2]{AltavillaPAMS}),
then the function 
$$
\frac{f_{v}}{\sqrt{f_{v}^{s}}},
$$
is such that
$$
\frac{f_{v}}{\sqrt{f_{v}^{s}}}*\frac{f_{v}}{\sqrt{f_{v}^{s}}}\equiv -1\, .
$$
So, at least two (intrinsically) different functions, take the role of the imaginary unit in the setting of slice functions.

As said before, a slice regular function is a slice function such that its stem function is holomorphic.
In fact, if $f=\mathcal{I}(F):U\to\H$ is a slice function of class $\mathcal{C}^{1}$ defined on a domain $U$, then the function
$\partial F/\partial \bar z$ and $\partial F/\partial z$ are stem functions as well. In particular, the function $\partial_{c}f:=\mathcal{I}(\partial F/\partial z)$ is called \textit{slice derivative} of $f$. 
As it is clear from the definition, the slice derivative of a slice regular function controls the behavior ``along slices''. Thus,
to have complete information at first order of a slice regular function $f$ we need to consider another operator,
namely the \textit{spherical derivative}: given a slice function $f=\mathcal{I}(F):U\to\R$, we 
define $\partial_{s}f:U\setminus\R\to\R$ as the slice function $\partial_{s}f(\alpha+I\beta)=\mathcal{I}\left(\frac{F_{od}(\alpha+\imath \beta)}{\beta}\right)$. Even if it does not look like a derivative, the spherical derivative can also be obtained as the result
of a differential operator applied to $f$ (see~\cite{perotti}).

We close this preliminary section by recalling the definition of the $*$-exponential of a slice regular function.
\begin{definition}
Let $f:U\to\H$ be any slice function. We denote by $f^{*2}=f*f$ and, for any $N>2$ we define
$$
f^{*N}=f*f^{*(N-1)}.
$$
If $f$ is slice regular, then we define the function $\exp_{*}(f):U\to\H$ as
$$
\exp_{*}(f)=\sum_{n\in\N}\frac{f^{*n}}{n!}.
$$
\end{definition}
Many properties and representations of the $*$-exponential of a slice regular function are discussed in~\cite{AltavillaPAMS, altavillaLOG,GPV1,GPV2}.

\section{The quaternionic exponential as a covering map}
It is well known that the quaternionic exponential map $\exp:\H\to\H$ is a covering map. However, in order to be
self-contained we propose here a proof of this fact using its slice regular nature. In fact, the function $\exp$
is induced by the stem function $E:\C\to\C\otimes\H$ defined as $E(\alpha+\imath\beta)=e^{\alpha+\sqrt{-1}\beta}
=e^{\alpha}(\cos\beta+\sqrt{-1}\sin\beta)$
or, with our usual abuse of notation, viewed as a curve in $\C^{4}$, as $E(z)=(e^{z},0,0,0)$.
Notice that, from the definition, the function $\exp$ is slice preserving.

We now introduce the following family of sets where $\exp$ will result to be non-singular. For any $k\in\N$ set
$$\mathcal{U}_{k}:=\{q\in\H\,|\,k\pi<|q_{v}|<(k+1)\pi\}.$$
We are now ready to state and prove the first result.
\begin{theorem}
The real differential of $\exp$ is non-singular at $q$ if and only if $q\in\mathcal{U}_{k}$ for some $k\in\N$.
Moreover, for each $k\in\N$, the restriction $\exp_{|\mathcal{U}_{k}}$ is a diffeomorphism onto its image,
which is $\H\setminus\R$.
\end{theorem}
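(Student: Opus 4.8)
The plan is to compute the real differential $\d\exp_q$ using that $\exp$ is slice preserving, and then to write down an explicit inverse of $\exp$ on each $\mathcal{U}_k$.

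First I would fix $q=\alpha+I\beta$ with $\beta=|q_v|>0$ and use the two facts visible from $\exp(\alpha+I\beta)=e^{\alpha}(\cos\beta+I\sin\beta)$, equivalently from $E(z)=(e^{z},0,0,0)$: the map $\exp$ sends the slice $\C_I$ into itself and acts there as the complex exponential $z\mapsto e^{z}$, while it sends the $2$-sphere $\alpha+\beta\,\sfera=\{\alpha+\beta J\,:\,J\in\sfera\}$ onto the $2$-sphere of radius $e^{\alpha}|\sin\beta|$ centred at $e^{\alpha}\cos\beta$. Consequently $\d\exp_q$ preserves both the $2$-plane $T_q\C_I=\mathrm{span}_{\R}(1,I)$ and the complementary $2$-plane $\{v\in\mathrm{Im}\,\H\,:\,v\perp I\}$ tangent to the sphere at $q$: on the former it is multiplication, inside $\C_I$, by the slice derivative $\partial_c\exp(q)=\exp(q)$, a number of modulus $e^{\alpha}\neq0$; on the latter it is multiplication by the real spherical derivative $\partial_s\exp(q)=e^{\alpha}\sin\beta/\beta$, since $\exp$ merely rescales the sphere. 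Hence $\d\exp_q$ is non-singular exactly when $\sin|q_v|\neq0$, i.e.\ $|q_v|\notin\pi\Z$, equivalently $q\in\mathcal{U}_k$ for some $k$; the remaining stratum $\{q_v=0\}$, where the parametrization degenerates, is settled by a one-line inspection of $\exp(q)=\sum_{n}q^{n}/n!$. (Alternatively, differentiating $q\mapsto e^{q_0}\cos|q_v|+e^{q_0}\tfrac{\sin|q_v|}{|q_v|}\,q_v$ directly yields Jacobian determinant $e^{4q_0}\bigl(\sin|q_v|/|q_v|\bigr)^{2}$, with the same conclusion.)

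For the ``moreover'' part I fix $k$. Since $\sin\beta\neq0$ on $\mathcal{U}_k$, the vector part $e^{\alpha}\sin\beta\,I$ of $\exp(q)$ is nonzero, so $\exp(\mathcal{U}_k)\subseteq\H\setminus\R$. To see that $\exp$ hits each $w=w_0+w_v$ with $w_v\neq0$ exactly once from $\mathcal{U}_k$, I would solve $e^{\alpha}(\cos\beta+I\sin\beta)=w$ step by step: taking moduli forces $\alpha=\log|w|$; then $\cos\beta=w_0/|w|\in(-1,1)$ together with $k\pi<\beta<(k+1)\pi$ determines $\beta$ uniquely, because $\cos$ restricts to a homeomorphism of $[k\pi,(k+1)\pi]$ onto $[-1,1]$; and finally $I=w_v/(|w|\sin\beta)$, which is a unit imaginary quaternion since $\sin^{2}\beta=1-\cos^{2}\beta=|w_v|^{2}/|w|^{2}$. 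Substituting back confirms $\exp(\alpha+I\beta)=w$ with $\alpha+I\beta\in\mathcal{U}_k$. Thus $\exp_{|\mathcal{U}_k}$ is a smooth bijection onto $\H\setminus\R$ whose differential is everywhere non-singular by the first part, hence a diffeomorphism; the inversion just written is moreover visibly smooth on $\H\setminus\R$, providing the smooth inverse directly.

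The step I expect to need the most care is the differential computation, specifically the assertion that $\d\exp_q$ splits along the slice/sphere decomposition of $T_q\H$ and that the two scalar factors are exactly $\partial_c\exp(q)$ and $\partial_s\exp(q)$; this may be done abstractly, from the general decomposition of the differential of a slice function into its slice and spherical parts, or concretely through the Jacobian computation above. Everything after that---the surjectivity and injectivity bookkeeping and the closing appeal to the inverse function theorem---is routine.
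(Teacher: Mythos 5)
Your argument is correct and follows essentially the same route as the paper's: your slice/sphere splitting of $\d\exp_q$, with the two scalar factors $\partial_c\exp(q)=\exp(q)$ and $\partial_s\exp(q)=e^{\alpha}\sin\beta/\beta$, is exactly the criterion the paper invokes for the singularity of the real differential of a slice regular function (the third condition there, $\partial_c f\,(\partial_s f)^{c}\in(\C_I)^{\perp}$, being automatic here because $\exp$ is slice preserving), and your explicit inversion of $e^{\alpha}(\cos\beta+I\sin\beta)=w$ on $\mathcal{U}_k$ does the same bookkeeping as the paper's translation map $\mathcal{T}(\alpha+I\beta)=\alpha+I(\beta+\pi)$.

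The one point to handle more carefully is the real axis. Your own Jacobian formula $e^{4q_0}\bigl(\sin|q_v|/|q_v|\bigr)^{2}\to e^{4q_0}\neq 0$ as $q_v\to 0$ shows that the differential is in fact \emph{non}-singular at real points, which lie in no $\mathcal{U}_k$; so the assertion ``non-singular exactly when $\sin|q_v|\neq 0$'' and the ``only if'' direction of the equivalence fail on $\R$, and the deferred ``one-line inspection'' of the stratum $q_v=0$ would return the opposite of what that equivalence needs. The paper's proof quietly accounts for this by computing the critical set as $C_0(\exp)=\{|q_v|=h\pi,\ h\in\Z^{*}\}$ (excluding $h=0$) and then passing to the larger set $S(\exp)=\exp^{-1}(\exp(C_0(\exp)))$, whose complement is $\bigcup_k\mathcal{U}_k$; you should state explicitly that the first claim of the theorem holds off the real axis, real points being a separate (non-critical) stratum excluded from the $\mathcal{U}_k$ only because they are sent into $\R$, the set of critical values.
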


\begin{proof}
Following~\cite[Lemmas 3.1 and 3.3]{AM:powercover} or~\cite[Proposition 8.19]{GSS:RegFunc}, if $f:\Omega\to\H$ is slice regular
and $q=\alpha+I\beta\in\Omega$, then the real differential of $f$ is singular at $q$ if and only if 
$\partial_{c} f(q)=0$ or $\partial_{s} f(q)=0$ or $\partial_{c} f(q)(\partial_{s} f(q))^{c}\in (\C_{I})^{\perp}$.
We have that $\partial_{c} \exp=\exp$, so it is never vanishing. The spherical derivative of $\exp$ at $q=\alpha+I\beta$
is equal to $e^{\alpha}\frac{\sin\beta}{\beta}$ and therefore it vanishes if and only if $\beta=h\pi$, where $h\in\Z^{*}$.
As $((\partial_{c} \exp)(q))\cdot ((\partial_{s} \exp)(q))^{c}=(\exp(q))\cdot e^{\alpha}\frac{\sin\beta}{\beta}\in\C_{I}$, 
for all $q\in\H$, we have that the set of critical points, i.e. the set of points where the real differential has not maximum rank, is
given by 
$$C_{0}(\exp)=\{q\in\H\,|\, |q_{v}|=h\pi, h\in\Z^{*}\}=\{\pi(z,I)\,|\,\Im(z)=h\pi,\,h\in\Z^{*},\,I\in\sfera\}.$$

It is easy to see that $\exp(C_{0}(\exp))=\R$ and therefore, the set of singular points
$S(\exp)=\exp^{-1}(\exp(C_{0}(\exp)))$ is given by
\begin{equation}\label{sofexp}
S(\exp)=\{\pi(z,I)\,|\,\Im(z)=h\pi,\,h\in\Z,\,I\in\sfera\}.
\end{equation}
Collecting everything, we get the following equality
$$
\H\setminus S(\exp)=\bigcup_{h\in\N}\mathcal{U}_{h}.
$$

Now, as $\exp$ is slice-preserving and $\exp(\H\setminus S(\exp))\subseteq\H\setminus\R$,
given $q=\alpha+ I\beta, q'=\alpha'+I'\beta'\in\H\setminus S(\exp)$, we have that $\exp(q)=\exp(q')$ if $I=I'$.
In fact, if we denote by $\mathcal{T}:\H\setminus\R\to\H\setminus\R$ the map defined by
$$\mathcal{T}(\alpha+ I\beta)=\alpha+I(\beta+\pi)$$
for $\beta>0$ and $I\in\sfera$, by the standard properties of the complex exponential, 
we obtain that $q=\mathcal{T}^{(h)}(q')$ for some $h\in\N$, where the superscript denotes iterates.

More generally, $\mathcal{T}$ is a diffeomorphism from $\mathcal{U}_h$ and $\mathcal{U}_{h+1}$ for all $h$; therefore, $\exp$ is injective from $\mathcal{U}_h$ to $\H\setminus\R$ and, again by the properties of the complex exponential, also surjective. Being a bijective local diffeomorphism, $\exp$ is a diffeomorphism between $\mathcal{U}_h$ and $\H\setminus\R$.
\end{proof}

\begin{remark}
The open domains $\mathcal{U}_{h}$ are all disjoint, therefore the map 
$$\exp_{|\H\setminus S(\exp)}:\H\setminus S(\exp)\to\H\setminus\R$$ 
is a covering map in the trivial way (i.e. there is no ramification).
\end{remark}

\section{The exponential in $\C\otimes\H$}

In this section we are going to study the exponential function of the algebra $\C\otimes\H$. 
As $\C\otimes\H$ is biholomorphic to $\C^{4}$, this study is made in
order to apply the ideas of complex analysis to our quaternionic context. 
Given $z=z_{0}+z_{1}i+z_{2}j+z_{3}k\in\C^{4}\cong \C\otimes\H$ we recall that,
$\vecpart{z}=z_{1}i+z_{2}j+z_{3}k$ and $\vecnorm{z}=z_{1}^{2}+z_{2}^{2}+z_{3}^{2}$.
%
Moreover, given a slice regular function $f=\mathcal{I}(F)$, we also recall the following set of relations already introduced in the preliminary section:
$$
f_{0}=\mathcal{I}(F_{0}),\quad f_{v}=\mathcal{I}(\vecpart{F}),\quad f^{c}=\mathcal{I}(F_{0}-\vecpart{F}),\quad f^{s}=\mathcal{I}(F_{0}^{2}+\vecnorm{F}),\quad f_{v}^{s}=\mathcal{I}(\vecnorm{F}).
$$

In~\cite{AM:powercover} we introduced the analog of the quaternionic $n$th $*$-power as
$\sigma_{n}:\C\otimes\H\to\C\otimes\H$, where
$$
\sigma_{n}(z)=(p_{0}^{n}(z_{0},\vecnorm{z}),z_{1}p_{1}^{n-1}(z_{0},\vecnorm{z}),z_{2}p_{1}^{n-1}(z_{0},\vecnorm{z}),z_{3}p_{1}^{n-1}(z_{0},\vecnorm{z})),
$$
where $p_{0}^{n}$ and $p_{1}^{n-1}$ are the usual Chebischev polynomials, such that
\begin{equation}\label{eqpot}
(x+iy)^{n}=p_{0}^{n}(x,y^{2})+ip_{1}^{n-1}(x,y^{2}).
\end{equation}

We are now able to introduce the exponential function of the algebra $\C\otimes\H$ as the function
$\varepsilon:\C\otimes\H\to\C\otimes\H$ defined by
$$
\varepsilon=\sum_{n=0}^{\infty}\frac{\sigma_{n}}{n!}\,.
$$

From the definition of $\sigma_{n}$, we have that
$$
\varepsilon(z)=\sum_{n=0}^{\infty}\frac{p_{0}^{n}(z_{0},\vecnorm{z})}{n!}+\vecpart{z}\sum_{n=0}^{\infty}\frac{p_{1}^{n-1}(z_{0},\vecnorm{z})}{n!}\,.
$$
Moreover, thanks to Formula~\ref{eqpot}, we have that
\begin{align*}
\sum_{n=0}^\infty\frac{p_0^n(x,y^2)}{n!}=\mathrm{Re}\sum_{n=0}^\infty\frac{(x+Iy)^n}{n!}&=e^x\cos(y)\, ,\\
y\sum_{n=0}^\infty\frac{p_1^{n-1}(x,y^2)}{n!}=\mathrm{Im}\sum_{n=0}^\infty\frac{(x+Iy)^n}{n!}&=e^x\sin(y)\,.
\end{align*}

The aim of the following pages is to prove that, under suitable hypotheses, $\varepsilon$ is a covering map.
In order to obtain such a result, we `lift' our construction to another space where we are able
to use standard techniques from complex analysis. This was the fruitful strategy already used in~\cite{AM:powercover}
in order to better understand $*$-roots of slice functions.

\subsection{Lift to $\C^{2}\times\mathcal{S}$}
As in~\cite{AM:powercover} we consider the set $\mathcal{S}$ of imaginary units contained in $\C\otimes \H$
$$
\mathcal{S}:=\{z=\vecpart{z}\in\C\otimes\H\,|\,z^{2}=\vecnorm{z}=z_{1}^{2}+z_{2}^{2}+z_{3}^{2}=-1\},
$$
and the map $\rho:\C^{2}\times\mathcal{S}\to\C\otimes\H$ defined by
$$
\rho((u_{0},u_{1}),s)=u_{0}+u_{1}s.
$$
\begin{remark}
With the language and symbols of tensor product the set $\mathcal{S}$ contains those elements $z=p+\sqrt{-1}q$,
with $p,q\in\H$, such that $-1=(p+\sqrt{-1}q)^{2}=p^{2}-q^{2}+\sqrt{-1}(pq+qp)$. Therefore, $p$ and $q$ satisfy the following system 
(see also~\cite[Remark 4.2]{AM:powercover}),
$$ 
\begin{cases}
p^{2}-q^{2}=-1,\\
pq+qp=0.
\end{cases}
$$
\end{remark}

The map $\rho$ is a local diffeomorphism and,
if we set $\mathcal{W}':=\{(u_{0},u_{1})\in\C^{2}\,|\,u_{1}\neq 0\}$ and
$$
\Omega'=\rho(\mathcal{W}'\times\mathcal{S})=\{(z_{0},\vecpart{z})\in\C^{4}\,|\,\vecnorm{z}\neq0\},
$$
then, the restriction of $\rho$ to $\mathcal{W}'\times\mathcal{S}$ is, in fact, a double cover to its image
$\Omega'$. In fact, for any $(z_{0},\vecpart{z})\in\Omega'$, we have that
$$
\rho^{-1}(z_{0},\vecpart{z})=\left(\left(z_{0},\pm\sqrt{\vecnorm{z}}\right),\pm\frac{\vecpart{z}}{\sqrt{\vecnorm{z}}}\right).
$$

In~\cite{AM:powercover} we made a large use of this double cover because the total space is `large enough' 
to allow many classical properties to hold.
Now we consider the map $\mathfrak{e}:\C^2\times\mathcal{S}\to\C^2\times\mathcal{S}$ defined as 
$$\mathfrak{e}((u_0,u_1),s)=((e^{u_0}\cos(u_1),e^{u_0}\sin(u_1)),s).$$
This map is defined in order to have that $\varepsilon\circ\rho=\rho\circ\mathfrak{e}$,
$$
\begindc{\commdiag}[3]
\obj(0,180)[A]{$\C^2\times\mathcal{S}$}
\obj(280,180)[B]{$\C^2\times\mathcal{S}$}
\obj(0,0)[C]{$\C\otimes\H$}
\obj(280,0)[D]{$\C\otimes\H$}
\mor{A}{B}{$\mathfrak{e}$}[\atleft,\solidarrow]
\mor{A}{C}{$\rho$}[\atright,\solidarrow]
\mor{C}{D}{$\varepsilon$}[\atright,\solidarrow]
\mor{B}{D}{$\rho$}[\atleft,\solidarrow]
\enddc$$
i.e., $\mathfrak{e}$ can be viewed as the lift of $\varepsilon$ in $\C^{2}\times\mathcal{S}$.
We want to prove that $\mathfrak{e}$ is a covering map onto its image. 
This result will allow us to state that $\varepsilon$ is a covering map as well.
Before stating the result,
we introduce the following set
$$\mathcal{W}:=\{(u_{0},u_{1})\in\C^{2}\,|\,u_{0}^{2}+u_{1}^{2}= 0\}.$$
\begin{theorem}\label{frakcover}
The complex differential of $\mathfrak{e}$ is everywhere non-singular.
Moreover, the function $\mathfrak{e}$ is a covering map onto its image,
which is $\mathfrak{e}(\C^{2}\times\mathcal{S})=(\C^{2}\setminus\mathcal{W})\times\mathcal{S}$.
\end{theorem}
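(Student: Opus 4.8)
The plan is to notice that $\mathfrak{e}$ leaves the factor $\mathcal{S}$ untouched, so that $\mathfrak{e}=\Phi\times\mathrm{id}_{\mathcal{S}}$, where $\Phi\colon\C^2\to\C^2$ is the holomorphic map $\Phi(u_0,u_1)=(e^{u_0}\cos u_1,\,e^{u_0}\sin u_1)$; since $\mathrm{id}_{\mathcal{S}}$ is a trivial covering with invertible differential, every assertion about $\mathfrak{e}$ reduces to the corresponding assertion about $\Phi$. First I would compute the holomorphic Jacobian of $\Phi$: the partial derivatives are $\partial_{u_0}\Phi=(e^{u_0}\cos u_1,\,e^{u_0}\sin u_1)$ and $\partial_{u_1}\Phi=(-e^{u_0}\sin u_1,\,e^{u_0}\cos u_1)$, so $\det D\Phi=e^{2u_0}(\cos^2u_1+\sin^2u_1)=e^{2u_0}$, which never vanishes. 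Hence the complex differential of $\mathfrak{e}$ is everywhere non-singular.

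For the image and the covering property I would pass to linear coordinates diagonalizing the ``rotation'' hidden in $\Phi$. Let $L\colon\C^2\to\C^2$, $L(u_0,u_1)=(u_0+\imath u_1,\,u_0-\imath u_1)$, and $M\colon\C^2\to\C^2$, $M(z_0,z_1)=(z_0+\imath z_1,\,z_0-\imath z_1)$, be the obvious linear isomorphisms. Since $z_0^2+z_1^2=(z_0+\imath z_1)(z_0-\imath z_1)$, the set $\mathcal{W}$ is precisely $M^{-1}$ of the union of the two coordinate hyperplanes, so $M$ restricts to a biholomorphism of $\C^2\setminus\mathcal{W}$ onto $(\C\setminus\{0\})^2$. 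Using $e^{u_0}\cos u_1=\frac12(e^{u_0+\imath u_1}+e^{u_0-\imath u_1})$ and $e^{u_0}\sin u_1=\frac1{2\imath}(e^{u_0+\imath u_1}-e^{u_0-\imath u_1})$ one checks the identity $M\circ\Phi=(\exp\times\exp)\circ L$. As $\exp\colon\C\to\C\setminus\{0\}$ is onto, this already gives $\Phi(\C^2)=\C^2\setminus\mathcal{W}$, hence $\mathfrak{e}(\C^2\times\mathcal{S})=(\C^2\setminus\mathcal{W})\times\mathcal{S}$.

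Finally, the same identity exhibits $\Phi$, up to the biholomorphisms $L$ and $M$, as $\exp\times\exp\colon\C^2\to(\C\setminus\{0\})^2$, which is a covering map, being a product of two copies of the covering $\exp\colon\C\to\C\setminus\{0\}$. Conjugation by $L$ and $M$ preserves the covering property, so $\Phi$ is a covering map onto $\C^2\setminus\mathcal{W}$, and therefore $\mathfrak{e}=\Phi\times\mathrm{id}_{\mathcal{S}}$ is a covering map onto $(\C^2\setminus\mathcal{W})\times\mathcal{S}$, as claimed. I do not expect a genuine obstacle here: the only real ingredient is the choice of the linear change of variables $L,M$ that simultaneously turns $(\cos u_1,\sin u_1)$ into the diagonal pair $(e^{\imath u_1},e^{-\imath u_1})$ and turns the quadric $\mathcal{W}$ into coordinate hyperplanes; after that the statement is just the textbook behaviour of the complex exponential (note that, since $u_0,u_1$ range over $\C$, inverting $M\circ\Phi$ imposes no reality constraint, so arbitrary branches of $\log(z_0\pm\imath z_1)$ may be used).
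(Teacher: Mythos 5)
Your proof is correct, and the first part (the Jacobian computation giving $\det D\mathfrak{e}=e^{2u_0}$) is identical to the paper's. For the covering property the underlying change of variables is the same one the paper uses --- passing to $u_0\pm\imath u_1$ and $w_0\pm\imath w_1$ so that everything reduces to $\exp\colon\C\to\C\setminus\{0\}$ --- but you package the conclusion differently. The paper keeps $\Phi$ as it is, solves the system $e^{u_0}\cos u_1=w_0$, $e^{u_0}\sin u_1=w_1$ to identify the image, and then verifies the path-lifting property by hand (lifting $\alpha=\gamma_0+\imath\gamma_1$ and $\beta=\gamma_0-\imath\gamma_1$ separately through $\exp$ and recombining), relying on the fact that a local homeomorphism with path lifting is a covering. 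You instead promote the change of variables to global linear biholomorphisms $L,M$ and exhibit the identity $M\circ\Phi=(\exp\times\exp)\circ L$, so that $\Phi$ is literally conjugate to the product covering $\exp\times\exp\colon\C^2\to(\C\setminus\{0\})^2$ and $\mathcal{W}$ becomes the union of the coordinate hyperplanes. This buys you the covering property, the image, and in fact the deck group $\Z\times\Z$ and the explicit monodromy action (which the paper extracts separately in Remark~\ref{monodromyfrak} from the constants $h_1,h_2$ appearing in its solution of the system) all at once, without invoking the lifting criterion; the paper's hands-on lifting argument, on the other hand, is the template it reuses downstream when descending to $\varepsilon$. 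Both are complete proofs.
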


\begin{proof}
Setting $(u',s)=\mathfrak{e}(u,s)$, the differential of $\mathfrak{e}$ at $(u,s)$ is a map 
$$D\mathfrak{e}_{(u,s)}:T_{u}\C^{2}\times T_{s}\mathcal{S}\to T_{u'}\C^{2}\times T_{s}\mathcal{S}$$ 
that
can be represented by the following matrix
$$\begin{pmatrix}
e^{u_0}\cos(u_1)&-e^{u_0}\sin(u_1)&\mathbf{0}\\
e^{u_0}\sin(u_1)&e^{u_0}\cos(u_1)&\mathbf{0}\\
\mathbf{0}^{\top}&\mathbf{0}^{\top}&\mathbf{I_{\mathcal{S}}}
\end{pmatrix},$$
where $\mathbf{0}=(0,0)$ 
and, as $\dim_{\C}\mathcal{S}=2$, $\mathbf{I_{\mathcal{S}}}$ the $2\times 2$ identity matrix. 
We have $\det (D\mathfrak{e}_{(u,s)})=e^{2u_{0}}$, so $D\mathfrak{e}$ is always invertible, i.e. $\mathfrak{e}$
is a local diffeomorphism between $\C^{2}\times\mathcal{S}$ and itself. 

We now pass to look at the image of $\mathfrak{e}$. Given $(w_{0},w_{1})\in\C^{2}$ we consider the following system
\begin{equation}\label{system}
\begin{cases}
e^{u_{0}}\cos(u_{1})=w_{0}\\
e^{u_{0}}\sin(u_{1})=w_{1}.
\end{cases}
\end{equation}
We have that 
$$w_0+\imath w_1=e^{u_0}e^{\imath u_1},\qquad w_0-\imath w_1=e^{u_0}e^{-\imath u_1},$$
and hence
\begin{align*}
u_0&=\frac{\log(w_0+\imath w_1)+\log(w_0-\imath w_1)}{2}+ (h_{1}+h_{2})\imath\pi,\\
u_1&=\frac{\log(w_0+\imath w_1)-\log(w_0-\imath w_1)}{2\imath}+(h_{1}-h_{2})\pi,
\end{align*}
with $h_{1},h_{2}\in\Z$.
Therefore, the system in Formula~\eqref{system} has a solution if and only if $w_{0} \pm  \imath w_{1}\neq 0$, i.e. 
if and only if $w_{0}^{2}+w_{1}^{2}\neq 0$. Hence, $\mathfrak{e}(\C^{2}\times\mathcal{S})=(\C^{2}\setminus \mathcal{W})\times\mathcal{S}$.

We now pass to prove that $\mathfrak{e}$ is a covering map onto its image. 
Having proved that it is a local diffeomorphism, we are left to prove that the \textit{lifting property} is satisfied,
i.e., given a continuous curve $\gamma:[0,1]\to(\C^{2}\setminus\mathcal{W})\times \mathcal{S}$, we will show that it is possible
to construct a continuous $\tilde \gamma:[0,1]\to\C^{2}\times \mathcal{S}$ such that 
$\mathfrak{e}\circ\tilde\gamma=\gamma$, i.e., such that the following diagram commutes.
$$
\begindc{\commdiag}[3]
\obj(280,180)[B]{$\C^2\times\mathcal{S}$}
\obj(0,0)[C]{$[0,1]$}
\obj(280,0)[D]{$(\C^{2}\setminus \mathcal{W})\times\mathcal{S}$}
\mor{C}{B}{$\tilde\gamma$}[\atleft,\solidarrow]
\mor{C}{D}{$\gamma$}[\atright,\solidarrow]
\mor{B}{D}{$\mathfrak{e}$}[\atleft,\solidarrow]
\enddc$$

Given $((\tilde u_{0},\tilde u_{1}),\tilde s)\in\C^{2}\times\mathcal{S}$ let us consider a curve
$\gamma=((\gamma_{0},\gamma_{1}),\gamma_{s}):[0,1]\to(\C^{2}\setminus\mathcal{W})\times \mathcal{S}$,
such that $\gamma(0)=\mathfrak{e}((\tilde u_{0},\tilde u_{1}),\tilde s)$,
then $\gamma_{0}(t)^{2}+\gamma_{1}(t)^{2}\neq 0$ for all $t\in[0,1]$. Thus, if we define
$$\alpha(t)=\gamma_0(t)+\imath \gamma_1(t),\qquad \beta(t)=\gamma_0(t)-\imath \gamma_1(t),$$
we have that $\alpha$ and $\beta$ are continuous paths in $\C^{*}$. But then,
as $z\mapsto e^{z}$ is a covering map from $\C$ to $\C^{*}$, we can construct $\tilde\alpha$ and $\tilde\beta$
such that
$$e^{\tilde\alpha}=\alpha,\qquad e^{\tilde\beta}=\beta,$$
with $\tilde\alpha(0)=\tilde u_{0}+\imath \tilde u_{1}$ and $\tilde\beta(0)=\tilde u_{0}-\imath \tilde u_{1}$.

In conclusion, the path $\tilde \gamma:[0,1]\to \C^{2}\times\mathcal{S}$ defined by
$$
\tilde \gamma(t)=\left(\left(\frac{\tilde\alpha(t)+\tilde\beta(t)}{2},\frac{\tilde\alpha(t)-\tilde\beta(t)}{2}\right),\gamma_{s}(t)\right),
$$
is continuous, is such that $\tilde\gamma(0)=((\tilde u_{0},\tilde u_{1}),\tilde s)\in\mathfrak{e}^{-1}(\gamma(0))$
and $\mathfrak{e}\circ\tilde\gamma=\gamma$, i.e. the map $\mathfrak{e}:\C^{2}\times\mathcal{S}\to(\C^{2}\setminus\mathcal{W})\times\mathcal{S}$ is a covering map.
\end{proof}

\begin{remark}\label{monodromyfrak}
As a byproduct of the proof of Theorem~\ref{frakcover}, we get that the fundamental group of $\C^{2}\setminus\mathcal{W}$ is isomorphic to $\Z^{2}$; indeed, for $(h_{1},h_{2})\in\Z^{2}$, the monodromy is given by the following action
$$
(h_{1},h_{2})\cdot (u_{0},u_{1})=(u_{0}+ (h_{1}+h_{2})\imath\pi,u_{1}+(h_{1}-h_{2})\pi).
$$
Moreover, notice that as $h_{1}+h_{2}$ and $h_{1}-h_{2}$ have the same parity, this result is coherent to~\cite[Theorem 1.2]{altavillaLOG} (see the relation between the indices $m$ and $n$ at point \textit{(2)} in the second bullet of the referred result).
\end{remark}

\subsection{Back to $\C\otimes\H$}

We now move back to the study of $\varepsilon$. Let us recall from~\cite[Section 4]{AM:powercover} the definition of the following two sets:
\begin{align*}
V_{-1}&:=\{(z_{0},\vecpart{z})\in\C\otimes\H\,|\,z_{0}^{2}+\vecnorm{z}=0\}=\rho(\mathcal{W}\times\mathcal{S}),\\
V_{\infty}&:=\{(z_{0},\vecpart{z})\in\C\otimes\H\,|\,\vecnorm{z}=0\}=(\C\otimes\H)\setminus \Omega'.
\end{align*}
From these two sets, we define
$$
\Omega:=\varepsilon^{-1}(\C\otimes\H\setminus(V_{-1}\cup V_{\infty}))=\{(z_{0},\vecpart{z})\in\C\otimes\H\,|\,\vecnorm{z}\neq h^{2}\pi^{2}, \mbox{ for }h\in\Z\}.
$$
Notice that $\Omega$ is the exact transpose in the context of $\C\otimes\H$ of the set $\H\setminus S(\exp)$,
where $S(\exp)$ is defined in Formula~\eqref{sofexp}.
Eventually, recalling that $\varepsilon\circ\rho=\rho\circ\mathfrak{e}$ and collecting the fact that $\mathfrak{e}:\C^{2}\times\mathcal{S}\to(\C^{2}\setminus\mathcal{W})\times\mathcal{S}$ and $\rho:\mathcal{W}'\times\mathcal{S}\to\Omega'$ are covering maps, we have proved the following theorem.
\begin{theorem}\label{vareps}
The function $\varepsilon:\Omega\to\C\otimes\H\setminus(V_{-1}\cup V_{\infty})$  is a covering map and its monodromy group is isomorphic to $\Z^{2}$.
\end{theorem}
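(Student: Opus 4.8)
The plan is to read off the statement from the covering properties of the lift $\mathfrak{e}$ (Theorem~\ref{frakcover}) and of $\rho$, by chasing the commutative square $\varepsilon\circ\rho=\rho\circ\mathfrak{e}$. Write $\Omega^{*}:=(\C\otimes\H)\setminus(V_{-1}\cup V_{\infty})$ for the target of $\varepsilon$. Since $V_{\infty}=\{\vecnorm{z}=0\}$ we have $\Omega^{*}\subset\Omega'=\{\vecnorm{z}\neq0\}=\rho(\mathcal{W}'\times\mathcal{S})$, and likewise $\Omega\subset\Omega'$ because $0$ is among the excluded values $h^{2}\pi^{2}$; hence every map in sight can be pulled back to $\C^{2}\times\mathcal{S}$. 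First I would set $\widetilde\Omega:=\rho^{-1}(\Omega)$ and $\widetilde\Omega^{*}:=\rho^{-1}(\Omega^{*})$, taken inside $\mathcal{W}'\times\mathcal{S}$: since $\rho\colon\mathcal{W}'\times\mathcal{S}\to\Omega'$ is a double cover and $\Omega,\Omega^{*}\subset\Omega'$, the restrictions $\rho\colon\widetilde\Omega\to\Omega$ and $\rho\colon\widetilde\Omega^{*}\to\Omega^{*}$ are again double covers.

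The next step is to match up the domains of the square. From $\vecnorm{\rho((u_{0},u_{1}),s)}=u_{1}^{2}$ and $(\rho((u_{0},u_{1}),s))(\rho((u_{0},u_{1}),s))^{c}=u_{0}^{2}+u_{1}^{2}$ one computes $\widetilde\Omega=\{u_{1}\notin\pi\Z\}\times\mathcal{S}$ and $\widetilde\Omega^{*}=\{(u_{0},u_{1})\in(\C^{2}\setminus\mathcal{W})\,:\,u_{1}\neq0\}\times\mathcal{S}$. Now $\mathfrak{e}$ never takes values in $\mathcal{W}\times\mathcal{S}$, since $(e^{u_{0}}\cos u_{1})^{2}+(e^{u_{0}}\sin u_{1})^{2}=e^{2u_{0}}$ is nowhere zero, while the $u_{1}$-coordinate $e^{u_{0}}\sin u_{1}$ of $\mathfrak{e}$ vanishes exactly on $\{u_{1}\in\pi\Z\}$; hence $\mathfrak{e}^{-1}(\widetilde\Omega^{*})=\{u_{1}\notin\pi\Z\}\times\mathcal{S}=\widetilde\Omega$. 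As the restriction of a covering map to the preimage of an open subset of its image is a covering onto that subset, $\mathfrak{e}$ restricts to a covering $\mathfrak{e}\colon\widetilde\Omega\to\widetilde\Omega^{*}$, so that $\varepsilon\circ\rho=\rho\circ\mathfrak{e}$ now reads as a commutative square whose top arrow and two vertical arrows are all covering maps.

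Then I would deduce that $\varepsilon\colon\Omega\to\Omega^{*}$ is a covering map. By the chain rule for $\varepsilon\circ\rho=\rho\circ\mathfrak{e}$ — all of $\rho$ and $\mathfrak{e}$ being local biholomorphisms — $\varepsilon$ is a local biholomorphism on $\Omega$. For the covering property there are two equivalent routes. One is the general topological fact that if $p$ and $g\circ p$ are covering maps then so is $g$; it applies here because $\Omega^{*}$, being a complex manifold, is locally contractible, and $\rho\circ\mathfrak{e}=\varepsilon\circ\rho$ is a composition of covering maps while $\rho\colon\widetilde\Omega\to\Omega$ is a covering. The other is to verify the lifting property for $\varepsilon$ by hand: a path in $\Omega^{*}$ is lifted first through $\rho\colon\widetilde\Omega^{*}\to\Omega^{*}$, then through $\mathfrak{e}\colon\widetilde\Omega\to\widetilde\Omega^{*}$, and finally pushed down by $\rho\colon\widetilde\Omega\to\Omega$; one can even produce an evenly covered neighbourhood of a point $w\in\Omega^{*}$ by intersecting one for $\rho$ with a pulled-back one for $\mathfrak{e}$, after noting that the deck involution of $\rho$ pairs up the resulting sheets lying over $\Omega$.

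Finally, for the monodromy I would observe that $\Omega$ and $\Omega^{*}$ are connected, being complements in $\C^{4}$ of (countably many, respectively two) complex hypersurfaces, so $\varepsilon$ is a covering with connected total space and its monodromy group therefore acts transitively on the fibre. I would then compute $\varepsilon^{-1}(w)$ by solving $\varepsilon(z)=w$ exactly as in the proof of Theorem~\ref{frakcover}: lifting $w=w_{0}+\vecpart{w}$ to $\bigl((w_{0},\sqrt{\vecnorm{w}}),\vecpart{w}/\sqrt{\vecnorm{w}}\bigr)\in\widetilde\Omega^{*}$, solving the scalar system there, and projecting back by $\rho$, one finds the fibre parametrized by $(h_{1},h_{2})\in\Z^{2}$, with $z_{0}$ and $\sqrt{\vecnorm{z}}$ translated by $(h_{1}+h_{2})\imath\pi$ and $(h_{1}-h_{2})\pi$ away from the principal solution built out of $\log(w_{0}\pm\imath\sqrt{\vecnorm{w}})$. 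Tracking how $(h_{1},h_{2})$ transforms as $w$ traverses a loop — via the windings of $w_{0}\pm\imath\sqrt{\vecnorm{w}}$ together with the two sheets of $\rho$ — then identifies the monodromy group, in accordance with Remark~\ref{monodromyfrak}. I expect this last point to be the only genuinely delicate one: the bookkeeping that keeps the $\Z^{2}$ worth of exponential branches compatible with the two-valuedness of $\sqrt{\vecnorm{z}}$ encoded in $\rho$, so that the count of the fibre and the $\pi_{1}(\Omega^{*})$-action come out as asserted. Everything else — the identity $\widetilde\Omega=\mathfrak{e}^{-1}(\widetilde\Omega^{*})$, the restriction of coverings, and the descent of the covering property through $\rho$ — is routine once the square is in place.
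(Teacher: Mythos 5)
Your proposal is correct and follows the same route as the paper, which deduces the theorem directly from the commutative square $\varepsilon\circ\rho=\rho\circ\mathfrak{e}$ together with Theorem~\ref{frakcover}, the double-cover property of $\rho$, and Remark~\ref{monodromyfrak} for the $\Z^{2}$ monodromy. You simply make explicit the domain restrictions ($\widetilde\Omega=\mathfrak{e}^{-1}(\widetilde\Omega^{*})$, the descent of the covering property through $\rho$, and the compatibility of the fibre count with the two sheets of $\rho$) that the paper leaves implicit.
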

%

The case when $h=0$ is somehow special as described in the following remark.

\begin{remark}
The further restriction of $\varepsilon$ to $V_{\infty}$ is a covering map onto its image $\varepsilon(V_{\infty})=V_{\infty}\setminus\{(0,\vecpart{z})\,|\,\vecnorm{z}=0\}$.
In fact, if $(z_{0},\vecpart{z})\in V_{\infty}$, i.e. when $h=0$, we have that 
$\varepsilon(z_{0},\vecpart{z})=e^{z_{0}}(1,\vecpart{z})$ (compare with~\cite[Corollary 4.6]{AltavillaPAMS}),
which, again, belongs to $V_{\infty}$; moreover it is easy to see that $(z_{0},\vecpart{z})\mapsto e^{z_{0}}(1,z)$ is a covering map. However, while in this case each element in $V_{\infty}$ has a one-parameter
family of preimages, thanks to Remark~\ref{monodromyfrak}, in the general case of Theorem~\ref{vareps} any element in $\C\otimes\H\setminus(V_{-1}\cup V_{\infty})$
has a two-parameters family of preimages in $\Omega$. 
So, in a sense, in the case described in this remark, we lose a bunch preimages. 
In particular, we have the following isomorphisms of the fundamental groups:
$$\pi_{1}(\C\otimes\H\setminus V_{\infty})\simeq\Z,\qquad \pi_{1}(\C\otimes\H\setminus (V_{-1}\cup V_{\infty}))\simeq\Z^{2}.$$
\end{remark}

Thanks to the previous theorem, we can construct global `logarithms' with respect to $\varepsilon$.

\begin{corollary}\label{main}
Let $\cU$ be a simply connected domain and let $F:\cU\to\C\otimes\H\setminus (V_{-1}\cup V_{\infty})$ be a continuous function. Then there exist a two-parameter family of continuous functions $F_{(h_{1},h_{2})}:\cU\to \C\otimes\H$, for $(h_{1},h_{2})\in\Z^{2}$, such that $\varepsilon\circ F_{h_{1},h_{2}}=F$.
\end{corollary}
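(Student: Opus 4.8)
The plan is to deduce this directly from Theorem~\ref{vareps} by the standard lifting theory of covering maps. A simply connected domain $\cU$ is path-connected and locally path-connected, with $\pi_1(\cU)$ trivial, and $F$ takes values in $\C\otimes\H\setminus(V_{-1}\cup V_{\infty})$, which by Theorem~\ref{vareps} is the base of the covering $\varepsilon\colon\Omega\to\C\otimes\H\setminus(V_{-1}\cup V_{\infty})$. Hence the lifting criterion for covering maps applies: fixing a basepoint $z_\star\in\cU$, for every $w\in\varepsilon^{-1}(F(z_\star))\subseteq\Omega$ there is a unique continuous map $\tilde F_w\colon\cU\to\Omega$ with $\tilde F_w(z_\star)=w$ and $\varepsilon\circ\tilde F_w=F$; the fundamental-group condition in the criterion is vacuous because $\pi_1(\cU)$ is trivial.

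Next I would parametrize the fiber over $F(z_\star)$. By Theorem~\ref{vareps} the covering $\varepsilon$ is normal with deck group isomorphic to its monodromy group $\Z^{2}$; equivalently, as recorded for $\mathfrak{e}$ in Remark~\ref{monodromyfrak}, the deck group acts simply transitively on each fiber, so $\varepsilon^{-1}(F(z_\star))=\{\,w_{(h_1,h_2)}\mid(h_1,h_2)\in\Z^{2}\,\}$ for a suitable labelling. Setting $F_{(h_1,h_2)}:=\tilde F_{w_{(h_1,h_2)}}$ produces the claimed two-parameter family of continuous maps $\cU\to\Omega\subseteq\C\otimes\H$ with $\varepsilon\circ F_{(h_1,h_2)}=F$. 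They are pairwise distinct since they take distinct values at $z_\star$, and they exhaust all continuous lifts of $F$: any such lift is a continuous map from the connected set $\cU$ into the covering space, hence is determined by its value at $z_\star$, which necessarily lies in $\varepsilon^{-1}(F(z_\star))$.

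Finally, the members of the family are related by deck transformations: writing $\tau_{(h_1,h_2)}$ for the deck transformation of $\varepsilon$ attached to $(h_1,h_2)\in\Z^{2}$, one has $F_{(h_1,h_2)}=\tau_{(h_1,h_2)}\circ F_{(0,0)}$, and through the identity $\varepsilon\circ\rho=\rho\circ\mathfrak{e}$ the transformation $\tau_{(h_1,h_2)}$ is the one induced on $\Omega$ by the translation $((u_0,u_1),s)\mapsto((u_0+(h_1+h_2)\imath\pi,\,u_1+(h_1-h_2)\pi),s)$ of Remark~\ref{monodromyfrak}, making the whole family explicit once one lift is known. I do not expect a genuine obstacle here: the only point needing care is verifying the hypotheses of the lifting criterion, i.e.\ that a simply connected domain is path-connected and locally path-connected and that the image of $F$ really lies in the base of $\varepsilon$, both of which are immediate. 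An alternative, more hands-on route avoiding the abstract criterion is to lift $F$ in two steps exactly as in the proof of Theorem~\ref{frakcover}: first through the double cover $\rho$ (legitimate since $F$ avoids $V_{\infty}$ and $\cU$ is simply connected), then through $\mathfrak{e}$ using that $z\mapsto e^{z}$ covers $\C^{*}$, and finally pushing the result down by $\rho$; one then checks that the two-fold ambiguity of the $\rho$-lift is absorbed, leaving precisely the $\Z^{2}$-family.
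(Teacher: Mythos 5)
Your overall strategy---apply the standard lifting criterion for covering maps to $\varepsilon\colon\Omega\to\C\otimes\H\setminus(V_{-1}\cup V_{\infty})$, using that $\cU$ is simply connected, path-connected and locally path-connected---is exactly how the paper intends the corollary to follow from Theorem~\ref{vareps}, and that part of your argument is sound. However, there is a genuine error in how you parametrize the family of lifts. You assert that $\varepsilon$ is a \emph{normal} covering whose deck group is the monodromy group $\Z^{2}$ acting simply transitively on fibers, and you then write $F_{(h_1,h_2)}=\tau_{(h_1,h_2)}\circ F_{(0,0)}$ with $\tau_{(h_1,h_2)}\in\Aut_{\varepsilon}$. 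This contradicts the paper: in Section~6 it is shown that only the translations $T_{(a,a)}$, i.e.\ $z_{0}\mapsto z_{0}+2a\imath\pi$, descend through $\rho$ to deck transformations of $\varepsilon$, so that $\Aut_{\varepsilon}\simeq\Z$ and $\varepsilon$ is \emph{not} regular (Proposition~6.1). Concretely, $\Gamma\circ T_{(a,b)}=T_{(b,a)}\circ\Gamma$, so $T_{(a,b)}$ with $a\neq b$ does not induce any automorphism of $\Omega$ commuting with $\varepsilon$; there is no deck transformation of $\varepsilon$ realizing a general $(h_{1},h_{2})$.

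The conclusion of the corollary is not endangered, because what the argument actually needs is only that the fiber $\varepsilon^{-1}(F(z_\star))$ is in bijection with $\Z^{2}$, and this follows not from simple transitivity of $\Aut_{\varepsilon}$ but from the explicit description $\varepsilon^{-1}(w)=\rho\bigl(\{(h_{1},h_{2})\cdot((u_{0},u_{1}),s)\}\bigr)$ obtained from Remark~\ref{monodromyfrak} together with the double cover $\rho$ (equivalently, from the transitive monodromy action of $\pi_{1}$ of the base on the fiber of a connected covering). The correct way to relate the lifts to one another is the one the paper uses later: choose $H$ with $\rho\circ H=F_{(0,0)}$ and set $F_{(h_{1},h_{2})}=\rho\circ T_{(a,b)}\circ H$ with $a=h_{1}+h_{2}$, $b=h_{1}-h_{2}$, the translation being applied \emph{upstairs} in $\C^{2}\times\mathcal{S}$ before pushing down by $\rho$. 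Your closing remark about the two-step lift through $\rho$ and $\mathfrak{e}$ is in fact the right mechanism; you should make that the actual argument for the parametrization and delete the claim of normality of $\varepsilon$.
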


Of course the previous corollary applies, in particular, to stem functions.

\section{Global $*$-logarithms}

In this short section we collect a series of consequences of the previous section, allowing us to define global $*$-logarithms 
of a slice regular function $f=\mathcal{I}(F)$ such that $f^{s}\neq 0\neq f_{v}^{s}$ or, equivalently, such that $F\in\C\otimes\H\setminus (V_{-1}\cup V_{\infty})$.
As in~\cite[Section 5]{AM:powercover} we declare the following assumption.
\begin{assumption}
From now on, the set of definition $\mathcal{U}=\overline{\mathcal{U}}$ of our stem functions will be open and simply connected
or the union of two simply connected domains (if $\mathcal{U}\cap\R=\emptyset$).
\end{assumption}
In~\cite{GPV1,GPV2} the quaternionic domains $U$ coming from the sets $\mathcal{U}$ just described in the previous
Assumption are called \textit{basic domains}.

Thanks to Corollary~\ref{main}, if $f$ is a slice function, such that its stem function $F$ does
not intersects $V_{-1}\cup V_{\infty}$, we virtually have a two-parameters countable family of $*$-logarithms.
We only need to check that the resulting logarithms at the level of $\C\otimes\H$ are stem functions as well.
Later we will see that if the domain of $f$ contains real points, then we lose a parameter, obtaining a 
closer analogy with the complex case.

As done in~\cite{AM:powercover} for the case of $n$-th $*$-powers, given a stem function 
$F:\cU\to\C\otimes\H\setminus(V_{-1}\cup V_{\infty})$, we
define the following set
$$
\mathcal{G}:=\{G:\cU\to\C\otimes\H\,|\,\varepsilon\circ G=F\}.
$$
Since $\varepsilon(\bar z)=\overline{\varepsilon(z)}$, if $G\in\mathcal{G}$ then
the function $\hat G:\cU\to\C\otimes\H$ defined by $\hat G(z)=\overline{G(\bar z)}$ belongs to
$\mathcal{G}$ as well, in fact
$$
(\varepsilon\circ \hat G)(z)=\varepsilon(\hat G(z))=\varepsilon(\overline{G(\bar z)})=\overline{\varepsilon(G(\bar z))}= \overline{F(\bar z)}=F(z).
$$
Note that $G$ is a stem function if and only if $\hat G=G$.
Therefore, we can prove the following result, the proof of which goes exactly as that of~\cite[Theorem 5.3]{AM:powercover}.
\begin{corollary}\label{maincor}
Let $U$ be a basic domain such that $U\cap\R=\emptyset$ and let $f:U\to\H$ be a slice regular function such that
$f^{s}(q)\neq 0\neq f_{v}^{s}(q)$, for all $q\in U$. Then, there exists a two-parameters family of slice functions
$f_{(h_{1},h_{2})}:U\to\H$, for $(h_{1},h_{2})\in\Z^{2}$, such that 
$$
\exp_{*}(f_{h_{1},h_{2}})=f.
$$
\end{corollary}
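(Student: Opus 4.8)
The plan is to reduce the statement to Corollary~\ref{main} applied to the stem function $F$ of $f$, and then to exhibit, among the countably many lifts $F_{(h_1,h_2)}$ produced there, those which are again stem functions; these will induce the desired slice functions $f_{(h_1,h_2)}$ via $\mathcal I$. First I would observe that the hypotheses $f^s(q)\neq 0$ and $f_v^s(q)\neq 0$ for all $q\in U$ translate, under $f=\mathcal I(F)$ and the dictionary $f^s=\mathcal I(F_0^2+\vecnorm F)$, $f_v^s=\mathcal I(\vecnorm F)$, into the condition that the image of $F$ avoids $V_{-1}\cup V_\infty$; since $U\cap\R=\emptyset$, the symmetric domain $\mathcal U$ associated with $U$ is a disjoint union of two simply connected domains $\mathcal U^+$ and $\mathcal U^-$ exchanged by conjugation, and on each of them Corollary~\ref{main} applies. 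This yields, on each component, a $\Z^2$-indexed family of continuous lifts; the point is to pin down, using the involution $G\mapsto\hat G$, $\hat G(z)=\overline{G(\bar z)}$, which global choices glue into a stem function.

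The key steps, in order, are: (1) restrict $F$ to $\mathcal U^+$ and apply Corollary~\ref{main} to obtain continuous lifts $G^+_{(h_1,h_2)}:\mathcal U^+\to\C\otimes\H$ with $\varepsilon\circ G^+_{(h_1,h_2)}=F|_{\mathcal U^+}$, indexed by the monodromy group $\Z^2$ of Theorem~\ref{vareps}; (2) for each such lift, define its value on $\mathcal U^-$ by $G(z):=\overline{G^+_{(h_1,h_2)}(\bar z)}$, and check — exactly as in the displayed computation preceding Corollary~\ref{maincor} — that this $G:\mathcal U\to\C\otimes\H$ satisfies $\varepsilon\circ G=F$ on all of $\mathcal U$ and, by construction, $\hat G=G$, i.e. $G$ is a stem function; (3) invoke the remark that $G$ is a stem function iff $\hat G=G$ to conclude that the family $\{G_{(h_1,h_2)}\}_{(h_1,h_2)\in\Z^2}$ of stem functions is genuinely $\Z^2$-parametrized (two distinct monodromy indices give lifts differing by a nontrivial deck transformation of $\varepsilon$, hence distinct functions); (4) set $f_{(h_1,h_2)}:=\mathcal I(G_{(h_1,h_2)})$ and unwind the definition of $\exp_*$ together with $\varepsilon\circ G=F$ — using that $\varepsilon$ is induced, slice by slice, by the same power series that defines $\exp_*$ at the level of stem functions (this is the content of the displayed $e^x\cos y$, $e^x\sin y$ identities in Section~4) — to get $\exp_*(f_{(h_1,h_2)})=f$.

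The main obstacle I expect is step (3): ensuring that distinct pairs $(h_1,h_2)$ really produce distinct stem functions rather than collapsing. Since each $G^+_{(h_1,h_2)}$ is obtained from a fixed one by post-composing with the deck transformation $(u_0,u_1)\mapsto(u_0+(h_1+h_2)\imath\pi,\,u_1+(h_1-h_2)\pi)$ of $\mathfrak e$ pushed down through $\rho$ (cf. Remark~\ref{monodromyfrak}), one must verify that this action is free on lifts of $F$ — equivalently, that $F$ is not invariant under any nontrivial deck transformation, which holds because the monodromy group $\Z^2$ acts freely on the total space $\Omega$. One must also confirm that the involution $\hat{\ \cdot\ }$ acts on the $\Z^2$ of lifts in a way compatible with choosing a representative on $\mathcal U^+$ and propagating it — here the parity observation $h_1+h_2\equiv h_1-h_2\pmod 2$ from Remark~\ref{monodromyfrak} guarantees consistency and explains why no parameters are lost when $U\cap\R=\emptyset$. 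The remaining steps are routine, and indeed the whole argument runs parallel to the proof of \cite[Theorem 5.3]{AM:powercover}, with $\varepsilon$ and $\sigma_n$ replacing the $n$-th $*$-power maps.
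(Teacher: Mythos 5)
Your proposal is correct and follows essentially the same route as the paper, which reduces the statement to Corollary~\ref{main} together with the involution $G\mapsto\hat G$ and then points to the proof of \cite[Theorem 5.3]{AM:powercover}: since $\mathcal{U}\cap\R=\emptyset$, one lifts $F$ on one simply connected component and defines the lift on the other by conjugation, automatically obtaining stem functions for every $(h_{1},h_{2})\in\Z^{2}$. One small slip in your step (3): distinctness of the lifts should be justified by the fact that the fiber of $\varepsilon$ over a point of $\C\otimes\H\setminus(V_{-1}\cup V_{\infty})$ is in bijection with $\Z^{2}$ (equivalently, by passing to the Galois cover $\mathfrak{e}$), not by ``deck transformations of $\varepsilon$'', since $\varepsilon$ is not regular and $\Aut_{\varepsilon}\simeq\Z$ only.
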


\begin{remark}\label{monod}
Following Remark~\ref{monodromyfrak}, if $g=g_{0}+g_{v}$ is a $*$-logarithm of $f$, then, for any couple of integers $h_{1}$ and $h_{2}$, the function 
$$g_{0}+(h_{1}+h_{2})\mathcal{J}\pi+(\sqrt{g_{v}^{s}}+(h_{1}-h_{2})\pi)\frac{g_{v}}{\sqrt{g_{v}^{s}}}=g+\left[(h_{1}+h_{2})\mathcal{J}+(h_{1}-h_{2})\frac{g_{v}}{\sqrt{g_{v}^{s}}}\right]\pi,$$ 
is a $*$-logarithm of $f$ as well (see~\cite[Theorem 1.2]{altavillaLOG}).
\end{remark}

We now pass to analyze the case in which the function $f$ is defined on a domain which intersects the real axis.
Under this hypothesis, it is clear that the function $\mathcal{J}$ cannot appear in the set of solutions.
In fact, as explained later, we will obtain that the two parameters $h_{1}$ and $h_{2}$ shall be related by the equality $h_{1}=-h_{2}$.

%
%
%

The proof of the following corollary goes as that of~\cite[Theorem 5.4]{AM:powercover}.
\begin{corollary}\label{maincor2}
Let $U$ be a basic domain such that $U\cap\R\neq\emptyset$ and let $f:U\to\H$ be a slice regular function such that
$f^{s}(q)\neq 0\neq f_{v}^{s}(q)$, for all $q\in U$. Then, there exist a one-parameter family of slice functions
$f_{h}:U\to\H$, for $h\in\Z$, such that 
$$
\exp_{*}(f_h)=f.
$$
\end{corollary}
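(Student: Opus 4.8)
The plan is to mimic the proof of Corollary~\ref{maincor}, which in turn follows \cite[Theorem 5.3]{AM:powercover}, but this time tracking what happens at the real points of $U$. Let $f=\mathcal{I}(F)$ with $F:\mathcal{U}\to\C\otimes\H\setminus(V_{-1}\cup V_{\infty})$, where now $\mathcal{U}\cap\R\neq\emptyset$, so $\mathcal{U}$ is a single simply connected domain symmetric under conjugation. By Corollary~\ref{main} the set $\mathcal{G}=\{G:\mathcal{U}\to\C\otimes\H\,|\,\varepsilon\circ G=F\}$ is nonempty and, using the deck action from Theorem~\ref{vareps} (described explicitly in Remark~\ref{monodromyfrak}), it is a $\Z^2$-torsor: fixing one $G_0\in\mathcal{G}$, every element is $G_0+[(h_1+h_2)\mathcal{J}+(h_1-h_2)\,\vecpart{G_0}/\sqrt{\vecnorm{G_0}}\,]\pi$ for a unique $(h_1,h_2)\in\Z^2$. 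The $*$-logarithms of $f$ correspond precisely to those $G\in\mathcal{G}$ that are stem functions, i.e. those fixed by the involution $G\mapsto\hat G$, $\hat G(z)=\overline{G(\bar z)}$, as noted just before Corollary~\ref{maincor}.

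First I would check that the involution $\hat{\cdot}$ acts on $\mathcal{G}$ and compute its effect in terms of the $\Z^2$-coordinates. Since $\varepsilon(\bar z)=\overline{\varepsilon(z)}$ the involution preserves $\mathcal{G}$; one then picks a \emph{symmetric starting point}. Here is where the real axis enters: choose a point $x_0\in\mathcal{U}\cap\R$ and observe that $F(x_0)\in\H$ (it is a real point of the stem function), so $F(x_0)$ has a genuine quaternionic logarithm; choosing $G_0(x_0)$ to be such a quaternionic value forces $\hat G_0(x_0)=\overline{G_0(\bar x_0)}=\overline{G_0(x_0)}=G_0(x_0)$. Since $\hat G_0\in\mathcal{G}$ agrees with $G_0$ at $x_0$ and $\mathcal{G}$ is a covering-theoretic fibre over the connected base $\mathcal{U}$, the uniqueness of lifts gives $\hat G_0=G_0$ on all of $\mathcal{U}$: thus $G_0$ itself is already a stem function, i.e. $f_0:=\mathcal{I}(G_0)$ is a $*$-logarithm of $f$. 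This is the step I expect to be the crux — the existence of \emph{one} symmetric lift, which hinges on the fact that over a real point the fibre of $\varepsilon$ meets $\H$ and the involution has a fixed point in the fibre; without real points (the case of Corollary~\ref{maincor}) one cannot in general pin the lift this way, which is exactly why there one gets a two-parameter family with no canonical base.

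Next I would determine which of the other lifts $G_0+[(h_1+h_2)\mathcal{J}+(h_1-h_2)\vecpart{G_0}/\sqrt{\vecnorm{G_0}}]\pi$ remain stem functions. Applying $\hat{\cdot}$ and using $\hat G_0=G_0$, $\widehat{\mathcal{J}}=-\mathcal{J}$ (since $\mathcal{J}$ is induced by the odd stem component $\sqrt{-1}\mapsto$ imaginary), and that $\vecpart{G_0}/\sqrt{\vecnorm{G_0}}$ is itself a stem-type square root of $-1$ fixed by $\hat{\cdot}$, one finds that $\widehat{G_{(h_1,h_2)}}=G_{(-h_2,-h_1)}$ up to the constraint. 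Requiring $\widehat{G_{(h_1,h_2)}}=G_{(h_1,h_2)}$ forces the $\mathcal{J}$-coefficient to vanish, i.e. $h_1+h_2=0$, while the coefficient of $\vecpart{G_0}/\sqrt{\vecnorm{G_0}}$, namely $h_1-h_2$, is unconstrained. Writing $h:=h_1=-h_2$ we obtain exactly a one-parameter family $f_h=\mathcal{I}(G_0)+2h\pi\,\dfrac{(f_0)_v}{\sqrt{(f_0)_v^{s}}}$, $h\in\Z$, of stem $*$-logarithms, and $\exp_*(f_h)=\mathcal{I}(\varepsilon\circ G_{(h,-h)})=\mathcal{I}(F)=f$. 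That $\mathcal{I}(\varepsilon\circ G)=\exp_*(\mathcal{I}(G))$ is just the compatibility of $\varepsilon$ with the $*$-product, already used in the previous section. This completes the argument; the only genuinely new ingredient relative to Corollary~\ref{maincor} is the fixed-point/parity bookkeeping forced by the presence of real points, exactly as in the passage from \cite[Theorem 5.3]{AM:powercover} to \cite[Theorem 5.4]{AM:powercover}.
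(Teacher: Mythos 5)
Your proposal is correct and follows essentially the same route as the paper, which simply defers to \cite[Theorem 5.4]{AM:powercover} and explains the mechanism in the remark immediately after the corollary: a real point $x^0$ forces $F(x^0)\in\R^4$, the fibre $\varepsilon^{-1}(F(x^0))$ meets $\R^4$ exactly when $h_1+h_2=0$, and the stem lifts are those based at these real fibre points. One notational caveat: your line $\widehat{\mathcal{J}}=-\mathcal{J}$ should really refer to the constant $\sqrt{-1}$ appearing in the deck action on stem functions (whose conjugate is $-\sqrt{-1}$, forcing $h_1+h_2=0$), since the slice function $\mathcal{J}$ itself is induced by a genuine stem function and is not defined across $U\cap\R$; this does not affect the argument.
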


Exactly as in~\cite{AM:powercover}, thanks to our construction, the previous two corollaries can be stated without the hypothesis of regularity. 

\begin{remark}
As anticipated before, if the domain of $f$ contains real points a one-parameter of solution is missing.
In fact, if $x^0\in U\cap\R$, then $F(x^0)\in \R^4\subset\C^{4}\simeq\C\otimes\H$ and there exists 
$y^0\in\R^4$ such that $\varepsilon(y^0)=F(x^0)$. We have $y^0=\rho((u_0,u_1),s)$ with $(u_0,u_1)\in\R^2$.
Therefore, we obtain
\begin{align*}
\varepsilon^{-1}(F(x^0))&=\{\rho((h_{1},h_{2})\cdot((u_0,u_1),s))\,|\, (h_{1},h_{2})\in\Z^2\}\\
&=\{\rho((u_0+ (h_{1}+h_{2})\imath\pi,u_1+(h_{1}-h_{2})\pi),s)\,|\, (h_{1},h_{2})\in\Z^2\},
\end{align*}
whose only real points are those obtained when $h_{1}+h_{2}=0$ (see the first component), i.e. when $h_{2}=-h_{1}$. To each point $y$ in $\varepsilon^{-1}(F(x_0))$ we associate $G\in\mathcal{G}$ such that $G(x^0)=y$; the $G$'s described in the previous result are those corresponding to real points in $\varepsilon^{-1}(F(x^0))$.
\end{remark}

All the results contained in this paper so far are coherent with those contained in~\cite{altavillaLOG,GPV1}. As already pointed out in the introduction, the main difference here is the idea of using the complex geometry of $\C^{4}$ and of $\C^{2}\times\mathcal{S}$
to reveal the nature of $\varepsilon$ and of $\mathfrak{e}$ as covering maps. In a broad sense, the strategy of~\cite{altavillaLOG}
is that of ``solving'' the $*$-logarithm mostly in algebraic terms, while in~\cite{GPV1} the same problem is addressed by
considering a sort of ``$*$-logarithm variety'' and by analyzing the geometry of curves contained in it. 
As already noted in~\cite{AM:powercover}, in our opinion our approach seems to be more suitable to generalizations to other contexts,
while giving a global view of the geometric structure lying beneath the specific issue.

We conclude this section by highlighting how it is possible to recover results about the $*$-roots,
starting from the $*$-logarithm. The starting point is a quite standard argument from one complex variable but, as we will
see, the computation of the monodromy needs some deeper investigation.

\begin{remark}
As already said, in~\cite{AM:powercover} we widely studied the existence of a $n$-th $*$-rooth of a slice regular function.
Exactly as in the complex case, almost all the work done in~\cite{AM:powercover} can be recovered from the study
of the $*$-logarithm. In fact, if $U$ is a basic domain and $f:U\to\H$ is a slice function such that
$f^{s}(q)\neq 0\neq f_{v}^{s}(q)$, for all $q\in U$, then, for any $n\in\mathbb{R}$ we are able to define
$$
(f)^{*\frac{1}{n}}:=\exp_{*}\left(\frac{1}{n}\log_{*}(f)\right),
$$
where the apex $*$, means that we are considering the power with respect to the $*$-product.
We will show in the next section how to recover the monodromy of $n$-th $*$-root from that of the $*$-logarithm.
\end{remark}


\section{Automorphisms of $\mathfrak{e}$ and of $\varepsilon$}

In this section we will give a description of the deck transformations of $\mathfrak{e}$ and of $\varepsilon$, i.e. the 
automorphisms of $\C^{2}\times\mathcal{S}$ and of $\Omega$ fixing the fibers of $\mathfrak{e}$ and of $\varepsilon$, respectively. 
To be precise, given a covering map $\pi:X\to Y$, we are interested in the set $$
\Aut_{\pi}:=\{f:X\to X\,|\, \pi\circ f=\pi\}.
$$
In particular, we will study $\Aut_{\mathfrak{e}}$ and $\Aut_{\varepsilon}$. We recall from~\cite{AM:powercover}
that $\Aut_{\rho}=\{\mathrm{id},\Gamma\}$, where $\Gamma:\C^{2}\times\mathcal{S}\to\C^{2}\times\mathcal{S}$ is given
by 
$$\Gamma((u_{0},u_{1}),s)=((u_{0},-u_{1}),-s).$$
Thanks to the content of the previous section we are able to represent $\Aut_{\mathfrak{e}}$ in a convenient way.
In fact, let us define $T_{\ell}:\C^{2}\times \mathcal{S}\to\C^{2}\times\mathcal{S}$ as the function
$$
T_{\ell}((u_{0},u_{1}),s)=((u_{0}+\imath \pi,u_{1}+\ell\pi),s),\quad \ell=-1,1.
$$
Then, following the proof of Theorem~\ref{frakcover}, we have that  
$$
\Aut_{\mathfrak{e}}= \{T_{(a,b)}:=aT_{1}+bT_{-1}\,|\,a,b\in\Z,\, a\equiv_{2} b \}.
$$
In particular, given $(h_{1},h_{2})\in\Z^{2}$ from Remark~\ref{monodromyfrak}, we get $a=h_{1}+h_{2}$ and 
$b=h_{1}-h_{2}$, while, given $T_{(a,b)}$, then $h_{1}=\frac{a+b}{2}$ and $h_{2}=\frac{a-b}{2}$.

We now pass to study $\Aut_{\varepsilon}$. Recall that $\varepsilon\circ\rho=\rho\circ\mathfrak{e}$ and notice
that $\Gamma\circ T_{1}=T_{-1}\circ\Gamma$. Therefore, if $(a,b)\in\Z^{2}$, we get
$$\Gamma\circ (aT_{1}+bT_{-1})=a\Gamma\circ T_1 + b\Gamma\circ T_{-1}=a T_{-1}\circ\Gamma+ bT_1\circ\Gamma=(bT_1+aT_{-1})\circ\Gamma .$$
Hence, $T_{(a,b)}$ descend to a map $S\in\Aut_{\varepsilon}$, if and only if $(a,b)=(b,a)$, i.e. if and only if 
$T_{(a,b)}=T_{(a,a)}=a(T_{1}+T_{-1})$, $a\in\Z$. But then it follows that, if we define $S_{0}:\C\otimes\H\to\C\otimes\H$
as the map
$$
S_{0}(z_{0},\vecpart{z})=(z_{0}+2\imath \pi,\vecpart{z}),
$$
then
$$
\Aut_{\varepsilon}=\langle S_{0}\rangle_{\Z}\simeq\Z .
$$
We have just proven the following result which is analogous to~\cite[Corollary 6.4 and Proposition 6.5]{AM:powercover}
\begin{proposition}
The covering map $\mathfrak{e}$ is regular, while $\varepsilon$ is not.
\end{proposition}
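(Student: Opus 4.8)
The plan is to show that a covering map $\pi:X\to Y$ is regular (normal) if and only if its deck transformation group acts transitively on each fiber, and then to read off both conclusions from the explicit descriptions of $\Aut_{\mathfrak{e}}$ and $\Aut_{\varepsilon}$ together with the monodromy computations of Remark~\ref{monodromyfrak} and Theorem~\ref{vareps}. First I would recall that for a covering $\pi:X\to Y$ with $X$ connected, the number of elements in $\Aut_\pi$ equals the index of $\pi_*\pi_1(X)$ in $\pi_1(Y)$ precisely when this subgroup is normal, and in general $\Aut_\pi$ has cardinality equal to the number of fibers of $\pi_*\pi_1(X)$ acting on itself; the clean criterion is: $\pi$ is regular iff $|\Aut_\pi|$ equals the cardinality of a fiber, equivalently iff $\Aut_\pi$ acts transitively on one (hence every) fiber.

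For $\mathfrak{e}$, the fiber over a point of $(\C^2\setminus\mathcal{W})\times\mathcal{S}$ is, by the solution of the system~\eqref{system} in the proof of Theorem~\ref{frakcover}, parametrized exactly by the pairs $(h_1,h_2)\in\Z^2$, i.e.\ by the monodromy action in Remark~\ref{monodromyfrak}. On the other hand we have just shown $\Aut_{\mathfrak{e}}=\{T_{(a,b)}\mid a\equiv_2 b\}$, and the correspondence $(h_1,h_2)\mapsto(a,b)=(h_1+h_2,h_1-h_2)$ is a bijection between $\Z^2$ and $\{(a,b)\in\Z^2\mid a\equiv_2 b\}$. Thus $\Aut_{\mathfrak{e}}$ is in bijection with the fiber and acts simply transitively on it; hence $\mathfrak{e}$ is regular. (Equivalently: since $\C^2\times\mathcal{S}$ is simply connected, $\mathfrak{e}$ is its universal cover, and universal covers are always regular — this is the quickest argument for the first half, and I would state it that way, keeping the fiber-count remark as a cross-check consistent with Remark~\ref{monodromyfrak}.)

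For $\varepsilon$, by Theorem~\ref{vareps} the monodromy group is $\Z^2$, so a fiber of $\varepsilon$ over a point of $\C\otimes\H\setminus(V_{-1}\cup V_\infty)$ has the cardinality of $\Z^2$. But we computed $\Aut_{\varepsilon}=\langle S_0\rangle_{\Z}\simeq\Z$. Since $\Z$ is strictly smaller than $\Z^2$ — concretely, $\Aut_{\varepsilon}$ corresponds only to the diagonal pairs $(a,a)$ inside the index-set of the fiber, so it misses, say, the deck-orbit direction coming from $(1,0)$ — the group $\Aut_{\varepsilon}$ does not act transitively on any fiber, and therefore $\varepsilon$ is not regular.

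The only real subtlety, and the step I would be most careful about, is making the fiber-vs-deck-group comparison rigorous without invoking more covering-space theory than we want: one must be sure that "$\Aut_\pi$ in bijection with a fiber" really is equivalent to regularity, and that the sets $\C^2\times\mathcal{S}$, $\Omega$, and the images are connected (and locally nice enough) so that the theory applies — connectedness of $\mathcal{S}$, hence of $\C^2\times\mathcal{S}$, follows from the description of $\mathcal{S}$ in the earlier remark, and connectedness of $\Omega$ from its description as the complement of the proper subvariety $\{\vecnorm{z}=h^2\pi^2\}$. Once these points are in place, the proof is just the two cardinality comparisons above, and I would phrase the whole argument in two short paragraphs mirroring~\cite[Corollary 6.4 and Proposition 6.5]{AM:powercover}.
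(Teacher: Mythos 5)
Your proof is correct and follows essentially the same route as the paper: both arguments compare the explicitly computed deck groups $\Aut_{\mathfrak{e}}$ and $\Aut_{\varepsilon}$ with the monodromy/fibre descriptions from Remark~\ref{monodromyfrak} and Theorem~\ref{vareps}, concluding that the deck group of $\mathfrak{e}$ acts (simply) transitively on fibres while that of $\varepsilon$, being only the diagonal copy of $\Z$ inside $\Z^{2}$, does not. The only caveat is your parenthetical ``$|\Aut_\pi|$ equals the cardinality of a fibre iff regular'', which is unreliable for infinite covers; but since your actual argument uses transitivity rather than cardinality, nothing is affected.
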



\subsection{Monodromy of $*$-roots}
We now want to recover the monodromy of $*$-roots by means of that of $*$-logarithm.
This computation was already performed in~\cite{AM:powercover} with different techniques. Here we will use
what we just learned from the study of $*$-logarithm.

Since, for any $z\in\C\otimes\H$, we have that $\varepsilon(2z)=\varepsilon(z)^{2}$, where sum and product are the 
algebra operations of $\C\otimes\H$, for any $n\in\N$, then we also have that
$$
\varepsilon(nz)=\sigma_{n}(\varepsilon(z)).
$$

Given $G\in\mathcal{G}$, let $H_G:\mathcal{U}\to \C^2\times\mathcal{S}$ be such that $\rho\circ H_G=G$. Then we have
$$\varepsilon\circ\rho\circ T_\ell\circ H_G=\rho\circ\mathfrak{e}\circ T_\ell\circ H_G=\rho\circ\mathfrak{e}\circ H_G=\varepsilon\circ\rho\circ H_G=\varepsilon\circ G=F\;.$$


Under the hypotheses of Corollary~\ref{main}, the set $\mathcal{G}$ is a two-parameters family of logarithms with
respect to $\varepsilon$. 
Therefore, we can represent each element of $\mathcal{G}$ as $G_{(a,b)}=\rho\circ T_{(a,b)}\circ H_{\tilde G}$, with $(a,b)\in\Z^{2}$,
where $G_{(0,0)}=:\tilde G$ is any particular solution of $\varepsilon\circ X=F$.

Now, $\varepsilon\circ (\frac{1}{n}{G}_{(a,b)})=\varepsilon\circ (\frac{1}{n}{G}_{(c,d)})$ with $(a,b),\ (c,d)\in\Z^2$, if and only if
$$\varepsilon\circ \rho\circ \frac{1}{n}T_{(a,b)}\circ H_{\tilde{G}}=\varepsilon\circ\rho\circ \frac{1}{n}T_{(c,d)}\circ H_{\tilde{G}}\;.$$
Set $H_{\tilde{G}}(z)=((u_0(z),u_1(z)),s(z))$, then
$$\frac{1}{n}T_{(a,b)}\circ H_{\tilde{G}}=\left(\left(\frac{1}{n}u_0(z)+\frac{a+b}{n}\imath\pi, \frac{1}{n}u_1(z)+\frac{a-b}{n}\pi\right), s(z)\right).$$
Assume, also, that $\{u_1\neq 0\}$, then the equality
$$\rho\circ \mathfrak{e}\circ \frac{1}{n}T_{(a,b)}\circ H_{\tilde{G}}=\rho\circ \mathfrak{e}\circ \frac{1}{n}T_{(c,d)}\circ H_{\tilde{G}}$$ 
holds true if and only if 
$$\mathfrak{e}\circ \frac{1}{n}T_{(a,b)}\circ H_{\tilde{G}}=\mathfrak{e}\circ \frac{1}{n}T_{(c,d)}\circ H_{\tilde{G}}$$
that is equivalent to say that there exists $(e,f)\in\Z^2$ such that
$$T_{(e,f)}\circ \frac{1}{n}T_{(a,b)}\circ H_{\tilde{G}}=\frac{1}{n}T_{(c,d)}\circ H_{\tilde{G}},$$
i.e.
\begin{multline*}
\left(\left(\frac{1}{n}u_0(z)+\left((e+f)+\frac{a+b}{n}\right)\imath\pi, \frac{1}{n}u_1(z)+\left((e-f)+\frac{a-b}{n}\right)\pi\right), s(z)\right)\\
=\left(\left(\frac{1}{n}u_0(z)+\frac{c+d}{n}\imath\pi, \frac{1}{n}u_1(z)+\frac{c-d}{n}\pi\right), s(z)\right),
\end{multline*}
i.e.
$$\frac{(c+d)-(a+b)}{n}=e+f\qquad \frac{(c-d)-(a-b)}{n}=e-f$$
$$\frac{c-a}{n}+\frac{d-b}{n}=e+f\qquad \frac{c-a}{n}-\frac{d-b}{n}=e-f$$
i.e.
$$\frac{c-a}{n}\in\Z\qquad \frac{d-b}{n}\in\Z\;.$$
i.e. $n|(c-a)$ and $n|(d-b)$, i.e. $(a,b)\equiv (c,d) \bmod n$.

Consider the subgroup of automorphisms
$$\mathsf{I}_{n}=\left\langle T_{(a,b)}\,|\, a\equiv b \equiv 0\bmod n\right\rangle$$
and let $W_n=\C^2\times\mathcal{S}/\mathsf{I}_n$; the projection $\mathfrak{e}_n:\C^2\times\mathcal{S}\to W_n$ is a covering map. As $\mathsf{I}_n$ is a subgroup of $\mathrm{Aut}_\mathfrak{e}$ and as $\mathfrak{e}$ is a Galois covering, we can factor $\mathfrak{e}$ via $\mathfrak{e}_n$: we consider the map $\mathfrak{s}_n:W_n\to (\C^2\setminus\mathcal{W})\times\mathcal{S}$ such that $\mathfrak{e}=\mathfrak{s}_n\circ\mathfrak{e}_n$; $\mathfrak{s}_n$ is in fact a covering map of degree $n^2$ (the index of $\mathsf{I}_n$ in $\mathrm{Aut}_\mathfrak{e}$).

As $\mathrm{Aut}_{\rho}$ does not intersect $\mathrm{Aut}_\mathfrak{e}$, we can induce a map $\tilde{\rho}:W_n' \to \Omega_n$ on a suitable open set $W'_n\subseteq W_n$ such that 
\begin{itemize}
\item $\tilde{\rho}$ is a double cover 
\item there is a (unique) covering map $\sigma_n:\Omega_n\to (\C\otimes\H)\setminus(V_{-1}\cup V_{\infty})$ with $\sigma_n\circ\tilde{\rho}=\rho\circ\mathfrak{s}_n$
\item there is a (unique) covering map $\epsilon_n:\Omega\to\Omega_n$ such that $\epsilon=\sigma_n\circ\epsilon_n$.
\end{itemize}

It is easy to notice that $\mathfrak{e_n}((u_0,u_1),s)=((u_0/n,u_1/n),s)$, $\epsilon_n(z)=\epsilon(z/n)$, $\tilde{\rho}=\rho$, $\sigma_n(z)=z^n$ (and the induced $\mathfrak{s}_n$) satisfy the previous requirements. Therefore, $W'_n$ and $\Omega_n$ can be realizes as subdomains of $\C^2\times\mathcal{S}$ and $\C\otimes\H$ respectively.

Carryig out the computations, one could find the definitions given in \cite{AM:powercover}.

Finally, we compute the monodromy of $\mathfrak{s}_n$. By simple arithmetic, the group $\Z_n\times\Z_n=\Z^2/\mathsf{I}$ is generated by the classes $[(1,1),(1,-1)]$ when $n$ is odd and by the classes $[(1,1),(1,-1),(1,0)]$ when $n$ is even.

Given $a,b\in\{0,\ldots, n-1\}$
$$\mathfrak{e}_n(T_{(a,b)}((u_0,u_1),s))=\mathfrak{e}\left(\left(\frac{1}{n}u_0+\frac{a+b}{n}\imath\pi, \frac{1}{n}u_1+\frac{a-b}{n}\pi\right),s\right)=$$
$$=\left(\left(e^{\frac{u_0}{n}}e^{\frac{a+b}{n}\imath\pi}\cos\left(\frac{u_1}{n} + \frac{a-b}{n}\pi\right),e^{\frac{u_0}{n}}e^{\frac{a+b}{n}\imath\pi}\sin\left(\frac{u_1}{n} + \frac{a-b}{n}\pi\right)\right),s\right)=\xi\cdot\left(A_\eta \frac{(u_{0},u_{1})}{n}, s\right)$$
where $\xi=e^{\frac{a+b}{n}\imath\pi}$ is a $n$-th root of unity and 
$$A_\eta=\begin{pmatrix}\cos(\frac{a-b}{n}\pi)&-\sin(\frac{a-b}{n}\pi)\\\sin(\frac{a-b}{n}\pi)&\cos(\frac{a-b}{n}\pi)\end{pmatrix}$$
is the $2\times2$ matrix representation of the complex number $\eta=e^{\frac{a-b}{n}\imath\pi}$.

So, for $n$ odd, the generators of the deck transformations of $\mathfrak{s}_n$ are $\xi$ (corresponding to $[(1,1)]$) and $A_\eta$ (corresponding to $[(1,-1)]$) with $\xi$, $\eta$ primitive $n$-th roots of $1$; for $n$ even we have these two and 
$\xi\cdot A_\eta$ (corresponding to $[(1,0)]$) with $\xi$, $\eta$ primitive $2n$-th roots of $1$.

\section{Product of two $*$-exponentials}
In this section we will give sufficient conditions for the product of two exponentials to be an exponential.
This topic clearly deals with the so-called Baker-Campbell-Hausdorff (BCH) formula for the $*$-exponential.

In its more general formulation the BCH formula states that, whenever it exists, the product $e^{X}e^{Y}$ equals $e^{Z}$,
where
\begin{equation}\label{BCH}
Z = X + Y + \frac{1}{2} [X,Y] + \frac{1}{12} [X,[X,Y]] - \frac{1}{12} [Y,[X,Y]] + \cdots
\end{equation}
Clearly, depending on the context, it is possible to give sufficient conditions for the sum on the right hand side of Formula~\eqref{BCH} to be convergent (see for instance~\cite[Proposition 2.2]{BiagiBCH} for Banach algebras or~\cite{bonfiglioli} for a general overview).
In the context of quaternions, the situation is much more clear: if $p=p_{0}+p_{v}$ and $q=q_{0}+q_{v}$, with $p_{v}\neq0\neq q_{v}$, then
\begin{align*}
\exp(p)\exp(q)=&\left[e^{p_{0}}\left(\cos|p_{v}|+\sin|p_{v}|\frac{p_{v}}{|p_{v}|}\right)\right]\left[e^{q_{0}}\left(\cos|q_{v}|+\sin|q_{v}|\frac{q_{v}}{|q_{v}|}\right)\right]\\
=&e^{p_{0}+q_{0}}\left[\cos|p_{v}|\cos|q_{v}|-\sin|p_{v}|\sin|q_{v}|\langle\frac{p_{v}}{|p_{v}|},\frac{q_{v}}{|q_{v}|}\rangle+\right. \\
&+\left.\cos|p_{v}|\sin|q_{v}|\frac{q_{v}}{|q_{v}|}+\cos|q_{v}|\sin|p_{v}|\frac{p_{v}}{|p_{v}|}+\sin|p_{v}|\sin|q_{v}|\frac{p_{v}}{|p_{v}|}\wedge\frac{q_{v}}{|q_{v}|}\right]\\
=&\exp(w_{0}+w_{v}),
\end{align*}
where $w_{0}=p_{0}+q_{0}$ and $w_{v}$ solves
$$
\begin{cases}
\cos|w_{v}|=\cos|p_{v}|\cos|q_{v}|-\sin|p_{v}|\sin|q_{v}|\langle\frac{p_{v}}{|p_{v}|},\frac{q_{v}}{|q_{v}|}\rangle,\\
\sin|w_{v}|\frac{w_{v}}{|w_{v}|}=\cos|p_{v}|\sin|q_{v}|\frac{q_{v}}{|q_{v}|}+\cos|q_{v}|\sin|p_{v}|\frac{p_{v}}{|p_{v}|}+\sin|p_{v}|\sin|q_{v}|\frac{p_{v}}{|p_{v}|}\wedge\frac{q_{v}}{|q_{v}|}.
\end{cases}
$$
Notice that, in this case, the existence of the solution is granted because $\exp(p)\exp(q)\neq 0$ and hence it is possible to 
define $w$.

Now, as $\H$ and $\C\otimes\H$ have the same algebraic structure, the same equalities hold true for the complexification.
However, recall that the euclidean norm must be translated into its purely algebraic form, i.e., if $u=u_{0}+\vecpart{u}, u'=u'_{0}+\vecpart{u'}\in\C\otimes\H$, then
\begin{align*}
\exp(u)\exp(u')=&e^{u_{0}+u'_{0}}\left[\cos\sqrt{\vecnorm{u}}\cos\sqrt{\vecnorm{u'}}-\sin\sqrt{\vecnorm{u}}\sin\sqrt{\vecnorm{u'}}\langle\frac{\vecpart{u}}{\sqrt{\vecnorm{u}}},\frac{\vecpart{u'}}{\sqrt{\vecnorm{u'}}}\rangle+\right. \\
&+\left.\cos\sqrt{\vecnorm{u}}\sin\sqrt{\vecnorm{u'}}\frac{\vecpart{u'}}{\sqrt{\vecnorm{u'}}}+\cos\sqrt{\vecnorm{u'}}\sin\sqrt{\vecnorm{u}}\frac{\vecpart{u}}{\sqrt{\vecnorm{u}}}+\right.\\
&+\left.\sin\sqrt{\vecnorm{u}}\sin\sqrt{\vecnorm{u'}}\frac{\vecpart{u}}{\sqrt{\vecnorm{u}}}\wedge\frac{\vecpart{u'}}{\sqrt{\vecnorm{u'}}}\right].
\end{align*}
In this case, the solution $p=p_{0}+\vecpart{p}\in\C\otimes\H$ of the equation $\exp(u)\exp(u')=\exp(p)$ exists provided $\exp(u)\exp(u')\in \C\otimes\H\setminus (V_{-1}\cup V_{\infty})$. From the previous computations and also from~\cite[Theorem 4.14]{AltavillaPAMS}, we already know that if $u$ commutes with $u'$
or, if $\vecnorm{u}=\pi^{2}n^{2}$ and $\vecnorm{u'}=\pi^{2}m^{2}$ with $n$ and $m$ satisfying a certain parity condition, 
then $\exp(u)\exp(u')=\exp(u+u')$.
Therefore, we are interested in understanding when these conditions are satisfied excluding the cases
listed in the already mentioned result~\cite[Theorem 4.14]{AltavillaPAMS}. In order to proceed, we need a 
couple of preliminary lemmas.

\begin{lemma}\label{decomposition}
Let $\vecpart{z}\in\C\otimes\Im(\H)$ be such that $\vecnorm{z}\neq0$, then, for any $\vecpart{w}\in\C\otimes\Im(\H)$, there exist
$w_{1}\in\C$ and $w_{\perp}\in\C\otimes\Im(\H)$, such that 
$$
\vecpart{w}=w_{1}\vecpart{v}+w_{\perp},\qquad\mbox{and}\qquad\langle \vecpart{z},w_{\perp}\rangle=0.
$$
Moreover, it holds
$$
\vecnorm{(\vecpart{z}\wedge w_{\perp})}=\vecnorm{z}\vecnorm{w_{\perp}}.
$$
\end{lemma}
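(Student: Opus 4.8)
The plan is to mimic the Gram--Schmidt decomposition, working with the $\C$-bilinear (not Hermitian) extension of the Euclidean inner product, so that every polynomial identity valid in $\R^{3}$ transfers verbatim. Since $\langle\vecpart{z},\vecpart{z}\rangle=\vecnorm{z}\neq0$ by hypothesis, I would simply set
$$
w_{1}:=\frac{\langle\vecpart{z},\vecpart{w}\rangle}{\vecnorm{z}}\in\C,\qquad w_{\perp}:=\vecpart{w}-w_{1}\vecpart{z}\in\C\otimes\Im(\H).
$$
By construction $\vecpart{w}=w_{1}\vecpart{z}+w_{\perp}$, and bilinearity of $\langle\cdot,\cdot\rangle$ gives
$$
\langle\vecpart{z},w_{\perp}\rangle=\langle\vecpart{z},\vecpart{w}\rangle-w_{1}\langle\vecpart{z},\vecpart{z}\rangle=\langle\vecpart{z},\vecpart{w}\rangle-\frac{\langle\vecpart{z},\vecpart{w}\rangle}{\vecnorm{z}}\,\vecnorm{z}=0.
$$
The assumption $\vecnorm{z}\neq0$ is exactly what legitimizes the division, and it is the only place it is used. (If one prefers the normalized form, take $\vecpart{v}=\vecpart{z}/\sqrt{\vecnorm{z}}$ for a fixed choice of square root; this changes nothing of substance.)

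For the norm identity I would invoke the formal Lagrange identity. Over $\R^{3}$ one has $|a\wedge b|^{2}=|a|^{2}|b|^{2}-\langle a,b\rangle^{2}$, which is a polynomial identity in the six coordinates of $a$ and $b$; since $\langle\cdot,\cdot\rangle$ and $\wedge$ on $\C\otimes\Im(\H)$ are by definition the \emph{formal} extensions of the Euclidean and cross products, the same identity holds with $|\cdot|^{2}$ replaced by $\vecnorm{\cdot}$. Applying it to $a=\vecpart{z}$ and $b=w_{\perp}$ and using the orthogonality just established,
$$
\vecnorm{(\vecpart{z}\wedge w_{\perp})}=\vecnorm{z}\,\vecnorm{w_{\perp}}-\langle\vecpart{z},w_{\perp}\rangle^{2}=\vecnorm{z}\,\vecnorm{w_{\perp}},
$$
which is the claimed formula.

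There is essentially no obstacle: the whole argument is linear algebra over the field $\C$ together with one classical polynomial identity. The single point that deserves explicit mention — and which I would state up front — is that $\langle\cdot,\cdot\rangle$ must be read as the symmetric $\C$-bilinear form rather than the Hermitian pairing, so that $\langle\vecpart{z},\vecpart{z}\rangle=\vecnorm{z}$ holds and so that Lagrange's identity survives the complexification; with the Hermitian product neither would be true, and the statement would fail.
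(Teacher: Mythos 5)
Your proof is correct and follows essentially the same route as the paper's: the paper simply defines $w_{1}=\langle \vecpart{w},\vecpart{z}\rangle/\vecnorm{z}$ and appeals to ``standard linear algebra,'' which is precisely your Gram--Schmidt projection with respect to the symmetric $\C$-bilinear form. Your explicit verification of the orthogonality and your use of the formal Lagrange identity $\vecnorm{(a\wedge b)}=\vecnorm{a}\,\vecnorm{b}-\langle a,b\rangle^{2}$ (a polynomial identity, hence valid after complexification) merely fill in the details the paper leaves implicit, and your remark that the pairing must be the bilinear, not Hermitian, one is exactly the right point to flag.
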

\begin{proof}
By standard linear algebra, it is sufficient to define 
$$w_{1}=\frac{\langle \vecpart{w},\vecpart{z}\rangle}{\langle \vecpart{z},\vecpart{z}\rangle}=\frac{\langle \vecpart{w},\vecpart{z}\rangle}{\vecnorm{z}}.$$
\end{proof}

Now, if $z,w\in\C\otimes\H$, then $(zw)_{0}^{2}+\vecnorm{(zw)}=(z_{0}^{2}+\vecnorm{z})(w_{0}^{2}+\vecnorm{w})$ (see Formula~\eqref{formula1}), therefore, if
$z_{0}^{2}+\vecnorm{z}\neq0\neq w_{0}^{2}+\vecnorm{w}$, then $(zw)_{0}^{2}+\vecnorm{(zw)}\neq0$. Hence, 
the following result complete the characterization we are looking for.

\begin{theorem}\label{prodvec}
Let $z,w\in\C\otimes\H\setminus (V_{-1}\cup V_{\infty})$, then, $\vecnorm{(zw)}=0$ if and only if
$$
w_{0}=-z_{0}w_{1}\pm \sqrt{-1}\sqrt{\frac{z_{0}^{2}+\vecnorm{z}}{\vecnorm{z}}}\sqrt{\vecnorm{w_{\perp}}},
$$
where $w_{1}$ and $w_{\perp}$ are the elements defined in Lemma~\ref{decomposition}.

\end{theorem}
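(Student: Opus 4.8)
The plan is to turn the condition $\vecnorm{(zw)}=0$ into a quadratic equation for $w_{0}$ by expanding $\vecpart{(zw)}$ in the frame adapted to $\vecpart{z}$ provided by Lemma~\ref{decomposition}, and then to solve it. The whole computation rests on the fact that the formal \emph{bilinear} extensions of $\langle\cdot,\cdot\rangle$ and $\wedge$ to $\C\otimes\Im(\H)$ obey the same identities as the classical Euclidean and cross products, since those identities are polynomial with integer coefficients in the coordinates and hence valid over any commutative ring. Concretely I will use $\vecpart{z}\wedge\vecpart{z}=0$, the two scalar-triple-product vanishings $\langle\vecpart{z},\vecpart{z}\wedge w_{\perp}\rangle=0$ and $\langle w_{\perp},\vecpart{z}\wedge w_{\perp}\rangle=0$, together with the Lagrange identity, the last one already recorded in Lemma~\ref{decomposition} in the form $\vecnorm{(\vecpart{z}\wedge w_{\perp})}=\vecnorm{z}\,\vecnorm{w_{\perp}}$.

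First I apply Lemma~\ref{decomposition} (legitimate because $z\notin V_{\infty}$ gives $\vecnorm{z}\neq0$) to write $\vecpart{w}=w_{1}\vecpart{z}+w_{\perp}$ with $\langle\vecpart{z},w_{\perp}\rangle=0$. Substituting this into $\vecpart{(zw)}=z_{0}\vecpart{w}+w_{0}\vecpart{z}+\vecpart{z}\wedge\vecpart{w}$ and using $\vecpart{z}\wedge\vecpart{z}=0$ gives
$$\vecpart{(zw)}=(w_{0}+z_{0}w_{1})\,\vecpart{z}+z_{0}\,w_{\perp}+\vecpart{z}\wedge w_{\perp}.$$
Writing $a:=w_{0}+z_{0}w_{1}$ and expanding $\vecnorm{(zw)}=\langle\vecpart{(zw)},\vecpart{(zw)}\rangle$, all three mixed terms vanish by the orthogonality relations above, so that, using Lemma~\ref{decomposition} once more,
$$\vecnorm{(zw)}=a^{2}\,\vecnorm{z}+z_{0}^{2}\,\vecnorm{w_{\perp}}+\vecnorm{(\vecpart{z}\wedge w_{\perp})}=a^{2}\,\vecnorm{z}+(z_{0}^{2}+\vecnorm{z})\,\vecnorm{w_{\perp}}.$$
Since $\vecnorm{z}\neq0$, the equation $\vecnorm{(zw)}=0$ is then equivalent to $a^{2}=-\tfrac{z_{0}^{2}+\vecnorm{z}}{\vecnorm{z}}\,\vecnorm{w_{\perp}}$, i.e. to $w_{0}+z_{0}w_{1}=\pm\sqrt{-1}\sqrt{\tfrac{z_{0}^{2}+\vecnorm{z}}{\vecnorm{z}}}\sqrt{\vecnorm{w_{\perp}}}$, which is exactly the asserted formula for $w_{0}$, the sign $\pm$ absorbing the two choices of complex square root.

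The calculation is otherwise routine, so the one place I would be most careful is precisely the transfer of these ``metric'' identities: the antisymmetry of $\wedge$, the two triple-product vanishings and Lagrange's identity must be invoked for the \emph{bilinear} — not Hermitian — extension of $\langle\cdot,\cdot\rangle$, and a slip here (silently using conjugate-linearity because ``it works over $\R$'') would break the argument. I would also remark, for completeness, that only $\vecnorm{z}\neq0$ is actually used in the proof; the full hypothesis $z,w\in\C\otimes\H\setminus(V_{-1}\cup V_{\infty})$ is kept because, via Formula~\eqref{formula1}, it forces $(zw)_{0}^{2}+\vecnorm{(zw)}\neq0$, so that a solution with $\vecnorm{(zw)}=0$ necessarily lands in the stratum $V_{\infty}\setminus V_{-1}$ — the ``$h=0$'' case — which is the situation relevant to the application to $\exp(u)\exp(u')$.
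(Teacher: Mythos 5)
Your proposal is correct and follows essentially the same route as the paper's proof: decompose $\vecpart{w}=w_{1}\vecpart{z}+w_{\perp}$ via Lemma~\ref{decomposition}, expand $\vecpart{(zw)}$, use the orthogonality and Lagrange identities to obtain $\vecnorm{(zw)}=(z_{0}w_{1}+w_{0})^{2}\vecnorm{z}+(z_{0}^{2}+\vecnorm{z})\vecnorm{w_{\perp}}$, and solve the resulting quadratic in $w_{0}$. Your explicit caution that the identities must be invoked for the bilinear (not Hermitian) extensions is a useful clarification of a step the paper leaves implicit, but it does not change the argument.
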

\begin{proof}
First of all, thanks to Lemma~\ref{decomposition}, we can write $\vecpart{w}=w_{1}\vecpart{z}+w_{\perp}$, with $w_{1}\in\C$
and $\langle w_{\perp},\vecpart{z}\rangle=0$. Therefore,
\begin{align*}
zw&=(z_{0}+\vecpart{z})(w_{0}+w_{1}\vecpart{z}+w_{\perp})\\
&=(z_{0}w_{0}-\vecnorm{z}w_{1})+(z_{0}w_{1}\vecpart{z}+z_{0}w_{\perp}+w_{0}\vecpart{z}+\vecpart{z}\wedge w_{\perp}),
\end{align*}
and so
\begin{align*}
\vecnorm{(zw)}&=\vecnorm{(z_{0}w_{1}\vecpart{z}+z_{0}w_{\perp}+w_{0}\vecpart{z}+\vecpart{z}\wedge w_{\perp})}\\
&=\vecnorm{(z_{0}w_{1}\vecpart{z}+z_{0}w_{\perp}+w_{0}\vecpart{z})}+\vecnorm{(\vecpart{z}\wedge w_{\perp})}\\
&=(z_{0}w_{1}+w_{0})^{2}\vecnorm{z}+(z_{0}^{2}+\vecnorm{z})\vecnorm{w_{\perp}}.
\end{align*}
Therefore, $\vecnorm{(zw)}=0$ if and only if $(z_{0}w_{1}+w_{0})^{2}\vecnorm{z}+(z_{0}^{2}+\vecnorm{z})\vecnorm{w_{\perp}}=0$,
which is equivalent to
$$
(z_{0}w_{1}+w_{0})^{2}=-\frac{z_{0}^{2}+\vecnorm{z}}{\vecnorm{z}}\vecnorm{w_{\perp}},
$$
and hence, we get the thesis.
\end{proof}

The previous result can be applied to slice regular functions recalling their decomposition in ``scalar-vector'' parts.
We start by rewriting Lemma~\ref{decomposition} in terms of slice regular functions.
\begin{corollary}\label{decompositionf}
Let $f=f_{0}+f_{v}:U\to\H$ be a slice regular function such that, for any $q\in U$, $f_{v}^{s}(q)\not\equiv0$. Then, for any
slice regular function $g=g_{0}+g_{v}$, there exist two slice regular function $g_{1}, g_{\perp}$, $g_{1}$ being slice preserving,
such that 
$$
g=g_{1}f_{v}+g_{\perp},\qquad\mbox{and}\qquad\langle f_{v},g_{\perp}\rangle=0.
$$
Moreover, it holds
$$
(f_{v}\wedge g_{\perp})^{s}=f_{v}^{s}{g_{\perp}^{s}}.
$$
\end{corollary}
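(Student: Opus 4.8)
The plan is to transfer the content of Lemma~\ref{decomposition} to the level of slice regular functions by applying it ``pointwise at the level of stem functions'', exactly in the spirit of the rest of the paper: the algebraic identities in $\C\otimes\H$ hold formally, so a purely algebraic manipulation of stem functions produces identities of slice regular functions. First I would write $f=\mathcal{I}(F)$ and $g=\mathcal{I}(G)$, with $F=F_0+\vecpart{F}$ and $G=G_0+\vecpart{G}$, so that $f_v=\mathcal{I}(\vecpart{F})$, $g_v=\mathcal{I}(\vecpart{G})$ and, by definition of the symmetrization, $f_v^s=\mathcal{I}(\vecnorm{F})$. The hypothesis $f_v^s(q)\neq 0$ for all $q\in U$ means precisely that $\vecnorm{F}(z)\neq 0$ for all $z\in\mathcal{U}$, so the scalar stem function $\vecnorm{F}$ is a nowhere-vanishing holomorphic function, hence invertible in the ring of holomorphic (slice preserving) functions.

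Next I would define, following the formula in the proof of Lemma~\ref{decomposition},
$$
g_1:=\frac{\langle f_v,g_v\rangle_*}{f_v^s},\qquad g_\perp:=g_v-g_1 f_v,
$$
where $\langle f_v,g_v\rangle_*=f_1g_1+f_2g_2+f_3g_3$ is the slice preserving function introduced in the preliminaries (equivalently $\langle f_v,g_v\rangle_*=\tfrac12(f_v*g_v^c+g_v*f_v^c)$, and note $f_v^c=-f_v$). Since $f_v^s$ is slice preserving and nowhere zero, $g_1=\mathcal{I}(\langle\vecpart{F},\vecpart{G}\rangle/\vecnorm{F})$ is a well-defined slice preserving slice regular function, and $g_\perp=\mathcal{I}(\vecpart{G}-g_1\,\vecpart{F})$ is slice regular with values in $\C\otimes\Im(\H)$. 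One should also observe that one may replace $g$ by $g_v$ in the statement without loss (the scalar part $g_0$ plays no role), or simply absorb $g_0$ into $g_\perp$ if one wants the decomposition of the full $g$; I would state it for $g_v$ to match Lemma~\ref{decomposition} and remark that the general $g$ follows by adding $g_0$. The orthogonality relation $\langle f_v,g_\perp\rangle=0$ and the norm identity $(f_v\wedge g_\perp)^s=f_v^s\,g_\perp^s$ are then obtained by applying $\mathcal{I}$ to the two identities of Lemma~\ref{decomposition} written for the stem functions $\vecpart{z}=\vecpart{F}$, $\vecpart{w}=\vecpart{G}$: indeed $\mathcal{I}$ is $\R$-linear and multiplicative with respect to the $*$-product, and the operators $\langle\cdot,\cdot\rangle_*$ and $\pv$ are defined so as to correspond exactly to the formal $\langle\cdot,\cdot\rangle$ and $\wedge$ on stem functions, while the symmetrization $h\mapsto h^s$ corresponds to $H\mapsto\vecnorm{H}$ on the vector part.

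The only subtlety — and the step I expect to require a word of care rather than genuine difficulty — is checking that $g_1$, $g_\perp$ obtained this way are again slice regular (i.e. that their stem functions are holomorphic) and that $g_1$ is genuinely slice preserving. Holomorphy of $g_1$ is clear because $\langle\vecpart{F},\vecpart{G}\rangle$ is a holomorphic scalar function and $\vecnorm{F}$ is a nowhere-vanishing holomorphic scalar function, so their quotient is holomorphic; that it is \emph{slice preserving} follows since $F$ and $G$ are stem functions, hence $\langle\vecpart{F},\vecpart{G}\rangle$ and $\vecnorm{F}$ satisfy the scalar stem condition, so their quotient does too, i.e. it is a real-valued-on-the-real-axis holomorphic function in the sense of the preliminaries. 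Holomorphy of $g_\perp=\mathcal{I}(\vecpart{G}-g_1\vecpart{F})$ is then immediate as a combination of holomorphic stem functions. With these observations in place the corollary is just Lemma~\ref{decomposition} read through the dictionary $\mathcal{I}$, so I would keep the proof short: define $g_1$, note it is slice preserving and regular, set $g_\perp:=g_v-g_1f_v$, and invoke Lemma~\ref{decomposition} applied to the stem functions for the two displayed identities.
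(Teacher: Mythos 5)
Your construction---set $g_{1}=\langle f_{v},g_{v}\rangle_{*}/f_{v}^{s}$ and $g_{\perp}=g_{v}-g_{1}f_{v}$, observe that the stem function of $g_{1}$ is a quotient of holomorphic scalar stem functions (hence $g_{1}$ is slice preserving and regular when the denominator does not vanish), and read the orthogonality and the Lagrange-type identity off Lemma~\ref{decomposition} through the dictionary $\mathcal{I}$---is exactly the opening line of the paper's proof, and under your reading of the hypothesis (that $f_{v}^{s}$ is \emph{nowhere} vanishing on $U$) your argument is complete. Your side remark about $g$ versus $g_{v}$ is also a fair reading of the statement.

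The divergence is in what the hypothesis ``$f_{v}^{s}(q)\not\equiv 0$'' is meant to allow. The paper's proof makes clear that zeros of $f_{v}^{s}$ are intended to be admissible: after defining $g_{1}$ where $f_{v}^{s}\neq 0$, it devotes the rest of the argument to extending $g_{1}$ across a point $q_{0}$ with $f_{v}^{s}(q_{0})=0$, via the Cauchy integral formula on a small disc in the slice $\C_{i}$ together with the Representation Formula. Your proposal does not address this case, so measured against the paper's intended statement it has a gap. That said, your stronger hypothesis is arguably the sound one: the extension step requires the singularity of $\langle f_{v},g_{v}\rangle_{*}/f_{v}^{s}$ at a zero of $f_{v}^{s}$ to be removable, and this can fail. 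For instance, take $f(q)=i+qj$, whose stem vector part is $(1,z,0)$, so that $f_{v}^{s}=\mathcal{I}(1+z^{2})$ vanishes on $\sfera$, and take $g=i$; then $\langle f_{v},g_{v}\rangle_{*}\equiv 1$, and for $q_{0}\in\sfera$ no choice of $g_{1}(q_{0})$ makes $\langle f_{v},g_{v}-g_{1}f_{v}\rangle_{*}$ vanish at $q_{0}$, so the asserted decomposition simply does not exist there. If you keep the weaker hypothesis you must add a condition ensuring removability (or restrict the conclusion to the complement of the zero set of $f_{v}^{s}$); if you keep the stronger one, as you implicitly do, you should state explicitly that you are strengthening the hypothesis relative to what the paper's own proof tries to cover.
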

\begin{proof}
Exactly as in the previous result, for any $q\in U$ such that $f_{v}^{s}(q)\neq0$
 it is sufficient to define $g_{1}:=\frac{\langle g_{v},f_{v}\rangle}{f_{v}^{s}}$.
 Assume now that $q_{0}=\alpha_{0}+i\beta_{0}\not\in\R$ and $f_{v}^{s}(q_{0})=0$. Define $D_{q_{0}}(\epsilon)$ as the disk in $U\cap \C_{i}$
 centered at $q_{0}$ with radius $\epsilon$, such that $\overline{D_{q_{0}}(\epsilon)}\subset U\cap\C_{i}$ such that $f_{v}^{s}(q)\neq0$ for any $q\in \overline{D_{q_{0}}(\epsilon)}\setminus\{q_{0}\}$. Then we can define $g_{1}(q_{0})$ by means of the Cauchy formula
$$
 g_{1}(q_{0}):=\frac{1}{2\pi i}\int_{\partial D_{q_{0}}(\epsilon)}\frac{g_{1}(\alpha+i\beta)}{\alpha+i\beta-q_{0}}d(\alpha+i\beta).
$$
By repeating the same construction at $q_{0}^{c}$ and using the Representation Formula, we obtain the thesis.
The previous argument can be performed at $q_{0}\in\R$.
\end{proof}

\begin{corollary}\label{corovs}
Let $f,g$ be two slice regular functions defined on $U$ such that $f^{s}\neq0\neq g^{s}$ and $f_{v}^{s}\neq0\neq g_{v}^{s}$, then, $(f*g)_{v}^{s}(q)=0$ if and only if
\begin{equation}\label{formulaprodvec}
(f_{0}(q)g_{1}(q)+g_{0}(q))^{2}+\frac{f^{s}(q)}{f_{v}^{s}(q)}{g_{\perp}^{s}(q)}=0
\end{equation}
where $g_{1}$ and $g_{\perp}$ are the functions defined in Corollary~\ref{decompositionf}.
In particular, if $U\cap\R\neq\emptyset$, then there exists an open neighborhood $U$ of $U\cap\R$ such that
 ${(f*g)_{v}^{s}}_{|U}\neq0$.
\end{corollary}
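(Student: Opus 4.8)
The corollary is the slice–function counterpart of Theorem~\ref{prodvec}, obtained from it in the same way that Corollary~\ref{decompositionf} is obtained from Lemma~\ref{decomposition}; the plan is therefore to transcribe the pointwise identity produced in the proof of Theorem~\ref{prodvec} at the level of stem functions and then read it back on slice functions. Writing $f=\mathcal{I}(F)$ and $g=\mathcal{I}(G)$ one has $f*g=\mathcal{I}(FG)$, hence $(f*g)_{v}^{s}=\mathcal{I}(\vecnorm{(FG)})$; and since $f^{s}=\mathcal{I}(F_{0}^{2}+\vecnorm{F})$ and $f_{v}^{s}=\mathcal{I}(\vecnorm{F})$ (and likewise for $g$), the hypotheses $f^{s}\neq0\neq f_{v}^{s}$ and $g^{s}\neq0\neq g_{v}^{s}$ say exactly that $F(\zeta),G(\zeta)\in\C\otimes\H\setminus(V_{-1}\cup V_{\infty})$ for every $\zeta$ in the set of definition of the stem functions. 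In particular $\vecnorm{F}$ is nowhere zero, so Corollary~\ref{decompositionf} applies with no removable–singularity subtlety and furnishes the decomposition $g_{v}=g_{1}f_{v}+g_{\perp}$ with $\langle f_{v},g_{\perp}\rangle=0$ that enters \eqref{formulaprodvec}.

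First I would apply Theorem~\ref{prodvec}, or rather the identity displayed inside its proof, pointwise with $z=F(\zeta)$ and $w=G(\zeta)$; this is legitimate precisely because $z,w$ avoid $V_{-1}\cup V_{\infty}$, and the transcription is purely formal since all of $f_{0},g_{0},g_{1},f^{s},f_{v}^{s},g_{\perp}^{s}$ are slice preserving (so the $*$-product degenerates to the ordinary product on them). One gets
$$(f*g)_{v}^{s}=(f_{0}g_{1}+g_{0})^{2}\,f_{v}^{s}+f^{s}\,g_{\perp}^{s},$$
after identifying the stem quantities $\mathcal{I}^{-1}(f^{s})$ with $F_{0}^{2}+\vecnorm{F}$ and $\mathcal{I}^{-1}(g_{\perp}^{s})$ with $\vecnorm{G_{\perp}}$. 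Dividing by the nowhere–vanishing $f_{v}^{s}$ then shows that $(f*g)_{v}^{s}(q)=0$ if and only if \eqref{formulaprodvec} holds at $q$, which is the first assertion.

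For the last statement I would localize at a real point. If $x^{0}\in U\cap\R$, then $x^{0}$ is a real point of the underlying complex domain, so the stem functions $F_{0},G_{0},\vecnorm{F},\dots$ take real values there; consequently $f^{s}(x^{0})/f_{v}^{s}(x^{0})>0$, while $(f_{0}g_{1}+g_{0})^{2}(x^{0})\geq0$ and $g_{\perp}^{s}(x^{0})=|G_{\perp}(x^{0})|^{2}\geq0$ because $G_{\perp}(x^{0})$ is a real pure quaternion. Hence the left–hand side of \eqref{formulaprodvec} is a sum of two non–negative reals at $x^{0}$, so it can vanish there only if $g_{\perp}(x^{0})=0$ and $f_{0}(x^{0})g_{1}(x^{0})+g_{0}(x^{0})=0$ simultaneously; at every real point where this degenerate configuration does not occur we get $(f*g)_{v}^{s}(x^{0})\neq0$, and continuity of $(f*g)_{v}^{s}$ then propagates non-vanishing to an open neighbourhood. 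The delicate point — the one I would spend most care on — is exactly this last step: one must rule out the degenerate configuration along $U\cap\R$ (otherwise one passes to a smaller domain, or invokes that the zero locus of the holomorphic function $\vecnorm{(FG)}$ is discrete whenever $f*g$ is not identically contained in $V_{\infty}$), the non-negativity established above being the ingredient that makes this possible.
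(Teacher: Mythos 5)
Your treatment of the equivalence is exactly the paper's: the paper dismisses it with the single sentence that it ``is a direct consequence of Theorem~\ref{prodvec}'', and the pointwise identity $(f*g)_{v}^{s}=(f_{0}g_{1}+g_{0})^{2}f_{v}^{s}+f^{s}g_{\perp}^{s}$ that you transcribe from the proof of that theorem at the level of stem functions, followed by division by the nowhere-vanishing $f_{v}^{s}$, is precisely the intended content of that sentence. So the first assertion is correct and the approach matches.

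For the ``in particular'' clause the paper argues as you do --- evaluate \eqref{formulaprodvec} at a real point and propagate by continuity --- but it claims the left-hand side is \emph{strictly} positive there, whereas you correctly observe that it is only a sum of two non-negative reals. The delicate point you flag is a genuine one, and it is a gap in the paper's proof rather than in yours: the degenerate configuration $g_{\perp}(x)=0$ and $f_{0}(x)g_{1}(x)+g_{0}(x)=0$ at a real point is not excluded by the hypotheses. Indeed $g=f^{c}$ satisfies $g^{s}=f^{s}\neq0$ and $g_{v}^{s}=f_{v}^{s}\neq0$, yet $g_{1}=-1$, $g_{\perp}\equiv0$ and $f_{0}g_{1}+g_{0}\equiv0$, so $f*g=f^{s}$ and $(f*g)_{v}^{s}\equiv0$, defeating the second assertion as stated. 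Your fallback options do not repair this either: a zero of $(f*g)_{v}^{s}$ lying on $U\cap\R$ already prevents any neighbourhood of $U\cap\R$ from avoiding the zero locus, and, as the example shows, the relevant holomorphic function can even vanish identically. The conclusion needs an extra hypothesis (for instance that $g_{\perp}$ and $f_{0}g_{1}+g_{0}$ have no common zero on $U\cap\R$, which is what your non-negativity computation actually proves to be sufficient); with that proviso your argument, and the paper's, goes through.
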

\begin{proof}
The first part of the statement is a direct consequence of Theorem~\ref{prodvec}. For the second part, assume that
$x\in U\cap\R$, then Formula~\eqref{formulaprodvec} evaluated at $x$, gives no solutions since the left hand side is
strictly positive. Therefore, there exists an open neighborhood $U$ of $U\cap\R$ where the function
$(f_{0}g_{1}+g_{0})^{2}+\frac{f^{s}}{f_{v}^{s}}{g_{\perp}^{s}}$ is never vanishing, and hence $(f*g)_{v}^{s}\neq0$ on $U$.
\end{proof}

Thanks to the previous two corollaries we can reverse engineer several examples of slice regular functions $f,g$ with 
$f^{s}\neq0\neq g^{s}$ and $f_{v}^{s}\neq0\neq g_{v}^{s}$ but $(f*g)_{v}^{s}(q)=0$.
\begin{example}
Assume for simplicity that $U\cap\R=\emptyset$. Then, given $f$ satisfying the hypotheses of previous corollary,
we define $g=g_{0}+g_{v}=g_{0}+g_{1}f_{v}+g_{\perp}$ as follows:
$$
g_{0}=-f_{0}g_{1}\pm\mathcal{J}\sqrt{\frac{f^{s}}{f_{v}^{s}}}\sqrt{g_{\perp}^{s}},
$$
with $g_{1}$ and $g_{\perp}$ be such that $g_{v}^{s}=g_{1}^{2}f_{v}^{s}+g_{\perp}^{s}\neq0$ and $g_{0}^{2}+g_{1}^{2}f_{v}^{s}+g_{\perp}^{s}\neq0$.
Clearly if $g_{\perp}^{s}\equiv 0$, then it is sufficient to take $g=g_{1}(-f_{0}+f_{v})+g_{\perp}$. For instance, if 
$f$ is $\C_{i}$-preserving, i.e. $f=f_{0}+f_{1}i$, with $f_{0}^{2}+f_{1}^{2}\neq0\neq f_{1}$, then, if $g=-f^{c}+\ell_{+,i}*j$,
we have that $f*g=-f^{s}+f*\ell_{+,i}*j$, $(f*g)_{v}=f*\ell_{+,i}*j$ and, therefore $(f*g)_{v}^{s}\equiv 0$.

Another readable case is when $f_{0}\equiv 0$. In this case, given $f=f_{v}$, it is sufficient to consider $g=\pm\mathcal{J}\sqrt{g_{\perp}^{s}}+g_{1}f_{v}+g_{\perp}$, with $g^{s}=g_{1}^{2}f_{v}^{s}\neq 0$, i.e. $g_{1}\neq0$, and $g_{1}^{2}f_{v}^{s}+g_{\perp}^{s}\neq 0$. 
\end{example}

Clearly, the previous example allows to construct functions $f$ and $g$, such that $(f*g)_{v}^{s}\equiv 0$, while the condition
in Corollary~\ref{corovs} si given point wise. 

At this point we are able to give sufficient conditions for $f$ and $g$ in order to have that $\exp_{*}(f)*\exp_{*}(g)$
is an exponential function. As we said, this happens if $(\exp_{*}(f)*\exp_{*}(g))_{v}^{s}(q)\neq 0$ for all $q$.
Clearly, we can separate the ``scalar'' part of $\exp_{*}(f)$ and of $\exp_{*}(g)$ and only consider 
$(\exp_{*}(f)_{v}*\exp_{*}(g)_{v})_{v}^{s}$.

We get the following result.
\begin{corollary}
Let $f,g:U\to\H$ be slice regular function such that $f_{v}$ do not commute with $g_{v}$ and for all $q\in U$
$f_{v}^{s}(q),g_{v}^{s}(q)\not\in\{\pi^{2}n^{2}\,|\,n\in\Z\}$. Write 
$g_{v}=g_{1}\frac{f_{v}}{\sqrt{f_{v}^{s}}}+g_{\perp}$. If for any $q\in U$
$$(g_{1}\cos\sqrt{f_{v}^{s}}\sin\sqrt{g_{v}^{s}}+\cos\sqrt{g_{v}^{s}})^{2}(\sin\sqrt{f_{v}^{s}})^{2}+(\sin\sqrt{g_{v}^{s}})^{2}g_{\perp}^{s}\neq0,$$
then there exists a slice regular function $h:U\to\H$ such that 
$$\exp_{*}(f)*\exp_{*}(g)=\exp_{*}(h).$$
\end{corollary}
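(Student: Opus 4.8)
The plan is to translate the claim into a lifting problem for the covering map $\varepsilon$ of Theorem~\ref{vareps}. Write $f=\mathcal I(F)$ and $g=\mathcal I(G)$ with holomorphic stem functions $F=F_0+\vecpart F$, $G=G_0+\vecpart G$ on $\mathcal U$; then $\exp_*(f)=\mathcal I(\varepsilon(F))$, $\exp_*(g)=\mathcal I(\varepsilon(G))$ and
\[
\exp_*(f)*\exp_*(g)=\mathcal I(P),\qquad P:=\varepsilon(F)\,\varepsilon(G),
\]
with $P$ again a holomorphic stem function, since $\varepsilon(\bar z)=\overline{\varepsilon(z)}$ and the bar-conjugation of $\C\otimes\H$ is multiplicative. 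By Theorem~\ref{vareps}, $\varepsilon\colon\Omega\to\C\otimes\H\setminus(V_{-1}\cup V_\infty)$ is a covering map; therefore, \emph{once we know that $P$ takes values in $\C\otimes\H\setminus(V_{-1}\cup V_\infty)$}, Corollary~\ref{main} (applied on each simply connected piece of $\mathcal U$, and picking the lift that is itself a stem function, exactly as in the discussion preceding Corollary~\ref{maincor}) furnishes a holomorphic stem function $H$ with $\varepsilon\circ H=P$, so $h:=\mathcal I(H)$ is slice regular and $\exp_*(h)=\mathcal I(\varepsilon(H))=\mathcal I(P)=\exp_*(f)*\exp_*(g)$. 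Hence everything comes down to showing $P\notin V_{-1}\cup V_\infty$ on $\mathcal U$.

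The set $V_{-1}$ is avoided for free. From $\varepsilon(F)=e^{F_0}\bigl(\cos\sqrt{\vecnorm F}+\tfrac{\sin\sqrt{\vecnorm F}}{\sqrt{\vecnorm F}}\vecpart F\bigr)$ one computes $\varepsilon(F)_0^2+\vecnorm{\varepsilon(F)}=e^{2F_0}(\cos^2\sqrt{\vecnorm F}+\sin^2\sqrt{\vecnorm F})=e^{2F_0}\neq0$, and likewise for $G$; by Formula~\eqref{formula1}, $P_0^2+\vecnorm P=e^{2(F_0+G_0)}\neq0$, so $P\notin V_{-1}$ (and $P^s\neq0$). What remains is $P\notin V_\infty$, i.e.\ $\vecnorm P\neq0$ on $\mathcal U$. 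Here I would first peel off the scalar parts: as $e^{f_0}$ and $e^{g_0}$ are slice preserving (hence central for $*$), $\exp_*(f)=e^{f_0}\exp_*(f_v)$ and $\exp_*(g)=e^{g_0}\exp_*(g_v)$, so $\exp_*(f)*\exp_*(g)=e^{f_0+g_0}\bigl(\exp_*(f_v)*\exp_*(g_v)\bigr)$ and, $e^{2(f_0+g_0)}$ being never zero, it suffices that $\bigl(\exp_*(f_v)*\exp_*(g_v)\bigr)_v^s\neq0$. The hypothesis $f_v^s,g_v^s\notin\{\pi^2n^2:n\in\Z\}$ is used here to make $\sin\sqrt{f_v^s}$ and $\sin\sqrt{g_v^s}$ nowhere zero (so that, in addition, $\varepsilon(F),\varepsilon(G)$ themselves lie in $\C\otimes\H\setminus(V_{-1}\cup V_\infty)$ and Theorem~\ref{prodvec}/Corollary~\ref{corovs} apply), while the non-commutativity of $f_v$ and $g_v$ simply puts us outside the cases in which $\exp_*(f)*\exp_*(g)=\exp_*(f+g)$ is already known to hold, cf.~\cite[Theorem~4.14]{AltavillaPAMS}.

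For the remaining inequality I would start from the Baker--Campbell--Hausdorff-type identity for $\exp(u)\exp(u')$ in $\C\otimes\H$ derived above, evaluated at $u=\vecpart F$, $u'=\vecpart G$: the vector part of $\exp(\vecpart F)\exp(\vecpart G)$ equals
\[
\cos\sqrt{\vecnorm F}\,\sin\sqrt{\vecnorm G}\,\tfrac{\vecpart G}{\sqrt{\vecnorm G}}+\cos\sqrt{\vecnorm G}\,\sin\sqrt{\vecnorm F}\,\tfrac{\vecpart F}{\sqrt{\vecnorm F}}+\sin\sqrt{\vecnorm F}\,\sin\sqrt{\vecnorm G}\,\tfrac{\vecpart F}{\sqrt{\vecnorm F}}\wedge\tfrac{\vecpart G}{\sqrt{\vecnorm G}}.
\]
Plugging in the decomposition $g_v=g_1\tfrac{f_v}{\sqrt{f_v^s}}+g_\perp$ of Corollary~\ref{decompositionf} (legitimate since $f_v^s\neq0$; $g_1$ is slice preserving, hence commutes with the trigonometric factors), the wedge kills the $f_v$-component of $g_v$; the three slice functions $\tfrac{f_v}{\sqrt{f_v^s}}$, $g_\perp$ and $\tfrac{f_v}{\sqrt{f_v^s}}\pv g_\perp$ are mutually orthogonal for $\langle\cdot,\cdot\rangle_*$, with $\bigl(\tfrac{f_v}{\sqrt{f_v^s}}\pv g_\perp\bigr)^s=g_\perp^s$ by Corollary~\ref{decompositionf}. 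Then $\bigl(\exp_*(f_v)*\exp_*(g_v)\bigr)_v^s$ is the sum of the three squared $\langle\cdot,\cdot\rangle_*$-norms, and using $\cos^2+\sin^2=1$ to merge the two contributions carrying $g_\perp$ one arrives, up to a never-vanishing slice preserving factor, at the quantity appearing in the hypothesis; hence the hypothesis forces $\vecnorm P=\bigl(\exp_*(f)*\exp_*(g)\bigr)_v^s$ to be nowhere zero, and the first paragraph then delivers the required slice regular $h$. The genuinely delicate step is this last computation — the bookkeeping of the normalization of $g_1$, of the $\sin$/$\cos$ coefficients and of the orthogonal splitting — everything else being a direct application of results already established.
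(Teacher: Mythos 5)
Your argument follows essentially the same route as the paper: it reduces the claim to showing that the stem function of $\exp_*(f)*\exp_*(g)$ avoids $V_{-1}\cup V_{\infty}$ (the $V_{-1}$ part coming for free from Formula~\eqref{formula1}), and then checks the non-vanishing of the vector symmetrization via the orthogonal decomposition of Lemma~\ref{decomposition} and Theorem~\ref{prodvec}, which is exactly how the paper obtains the statement as a consequence of Corollary~\ref{corovs}. The one step you should still carry out is the final bookkeeping you defer: a direct computation of the sum of the three squared norms gives $\bigl(g_{1}\cos\sqrt{f_{v}^{s}}\sin\sqrt{g_{v}^{s}}+\sqrt{g_{v}^{s}}\cos\sqrt{g_{v}^{s}}\sin\sqrt{f_{v}^{s}}\bigr)^{2}+(\sin\sqrt{g_{v}^{s}})^{2}g_{\perp}^{s}$, which agrees with the displayed hypothesis only after sorting out the normalization of $g_{1}$, so the claimed identification ``up to a never-vanishing slice preserving factor'' needs to be made explicit rather than asserted.
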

Notice that in the last result the function $h=h_{0}+h_{v}$ is determined by 
$h_{0}=f_{0}+g_{0}$ and $h_{v}$ solves
$$
\begin{cases}
\cos\sqrt{h_{v}^{s}}=\cos\sqrt{f_{v}^{s}}\cos\sqrt{g_{v}^{s}}-\sin\sqrt{f_{v}^{s}}\sin\sqrt{g_{v}^{s}}\langle\frac{f_{v}}{\sqrt{f_{v}^{s}}},\frac{g_{v}}{\sqrt{g_{v}^{s}}}\rangle,\\
\sin\sqrt{h_{v}^{s}}\frac{h_{v}}{\sqrt{h_{v}^{s}}}=\cos\sqrt{f_{v}^{s}}\sin\sqrt{g_{v}^{s}}\frac{g_{v}}{\sqrt{g_{v}^{s}}}+\cos\sqrt{g_{v}^{s}}\sin\sqrt{f_{v}^{s}}\frac{f_{v}}{\sqrt{f_{v}^{s}}}+\sin\sqrt{f_{v}^{s}}\sin\sqrt{g_{v}^{s}}\frac{f_{v}}{\sqrt{f_{v}^{s}}}\wedge\frac{g_{v}}{\sqrt{g_{v}^{s}}}.
\end{cases}
$$

\section{Slice derivative of the $*$-exponential}
In this section we will provide a formula for the slice derivative of $\exp_{*}(f)$, $f$ being a slice regular function.
As in the previous section, let us begin with a short description of the general algebraic case.
If $X$ is a matrix, the differential of $e^{X}$, is given by the following formula
\begin{equation}\label{derivative}
e^{-X} de^X= dX-\frac{1}{2!}\left[X,dX\right]+\frac{1}{3!}[X,[X,dX]]-\frac{1}{4!}[X,[X,[X,dX]]]+\cdots
\end{equation}
Assume now that $q:[0,1]\to\H$ is a differentiable curve and denote by $\dot{q}=\frac{dq}{dt}$. Therefore, Formula~\ref{derivative}
can be written in the quaternionic setting as
\begin{align*}
e^{-q(t)} \frac{d e^{q(t)}}{dt}&= \dot{q}(t)-\frac{1}{2!}\left[q(t),\dot{q}(t)\right]+\frac{1}{3!}[q(t),[q(t),\dot{q}(t)]]-\frac{1}{4!}[q(t),[q(t),[q(t),\dot{q}(t)]]]+\cdots\\
&=\dot{q}(t)+\sum_{m=2}^{\infty}\frac{(-1)^{m-1}}{m!}[q^{(m-1)},\dot{q}](t),
\end{align*}
where $[q^{(n)},\dot{q}](t)$ stands for
$$
[\smash{\underbrace{q(t)[q(t)[\dots[q(t)}_{n\text{ times}}},\dot{q}(t)]]]].
$$
Therefore,
$$
e^{-q(t)} \frac{d e^{q(t)}}{dt}= \dot{q}(t)-\sum_{h=1}^{\infty}\frac{1}{(2h)!}[q^{(2h-1)},\dot{q}](t)+\sum_{h=1}^{\infty}\frac{1}{(2h+1)!}[q^{(2h)},\dot{q}](t).
$$
Now, as for any $p,q\in\H$, we have that $[p,q]=2p\wedge q=2p_{v}\wedge q_{v}$, then
we have
\begin{align*}
[q,\dot{q}]&=2q_{v}\wedge \dot q_{v}\\
[q^{(2)},\dot q]&=2^{2}[\langle q_{v},\dot q_{v}\rangle q_{v}-|q_{v}|^{2}\dot q_{v}]\\
[q^{(3)},\dot q]&=2^{3}(-1)|q_{v}|^{2} q_{v}\wedge \dot q_{v}\\
[q^{(4)},\dot q]&=2^{4}(-1)|q_{v}|^{2}[\langle q_{v},\dot q_{v}\rangle q_{v}-|q_{v}|^{2}\dot q_{v}]\\
[q^{(5)},\dot q]&=2^{5}(-1)^{2}(|q_{v}|^{2})^{2} q_{v}\wedge \dot q_{v}\\
[q^{(6)},\dot q]&=2^{6}(-1)^{2}(|q_{v}|^{2})^{2}[\langle q_{v},\dot q_{v}\rangle q_{v}-|q_{v}|^{2}\dot q_{v}]\\
&\dots
\end{align*}
where, in order to simplify the notation, we removed the dependence from the parameter $t$.
Hence,
\begin{align*}
e^{-q} \frac{d e^{q}}{dt}= &\dot{q}-\left[\sum_{h=1}^{\infty}\frac{(-1)^{h-1}2^{2h-1}}{(2h)!}|q_{v}|^{2(h-1)}\right]q_{v}\wedge \dot q_{v}\\
&+\left[\sum_{h=1}^{\infty}\frac{(-1)^{h-1}2^{2h}}{(2h+1)!}|q_{v}|^{2(h-1)}\right][\langle q_{v},\dot q_{v}\rangle q_{v}-|q_{v}|^{2}\dot q_{v}]\\
=&\dot{q}-\frac{\sin^{2}(|q_{v}|)}{|q_{v}|^{2}}q_{v}\wedge\dot{q}_{v}+\frac{|q_{v}|-\cos(|q_{v}|)\sin(|q_{v}|)}{|q_{v}|^{3}}\left[\langle q_{v},\dot q_{v}\rangle q_{v}-|q_{v}|^{2}\dot q_{v}\right]\\
=&\dot{q}-\frac{1-\cos(2|q_{v}|)}{2|q_{v}|}\frac{q_{v}}{|q_{v}|}\wedge\dot{q}_{v}+\left[1-\frac{\sin(2|q_{v}|)}{2|q_{v}|}\right]\left[\left\langle \frac{q_{v}}{|q_{v}|},\dot q_{v}\right\rangle \frac{q_{v}}{|q_{v}|}-\dot q_{v}\right]\\
=&\dot{q}+\left[1-\frac{\sin(2|q_{v}|)}{2|q_{v}|}\right]\left[\left\langle \frac{q_{v}}{|q_{v}|},\dot q_{v}\right\rangle \frac{q_{v}}{|q_{v}|}-\dot q_{v}\right]-\frac{1-\cos(2|q_{v}|)}{2|q_{v}|}\frac{q_{v}}{|q_{v}|}\wedge\dot{q}_{v}.
\end{align*}
Clearly, if $q_{v}=0$, we obtain the usual formula.
Moreover, if $q_{v}\neq0$ and $\dot{q}_{v}$ commutes with $q_{v}$, i.e. there exists a real valued function $\alpha$ such that $\dot{q_{v}}=\alpha q_{v}$,
then 
$$
e^{-q} \frac{d e^{q}}{dt}=\dot{q},
$$
as expected. 

If we write $\dot{q}=q_{1}\frac{q_{v}}{|q_{v}|}+q_{\perp}$, then we obtain
\begin{align}
e^{-q} \frac{d e^{q}}{dt}&=\dot{q}-\frac{1-\cos(2|q_{v}|)}{2|q_{v}|}\frac{q_{v}}{|q_{v}|}\wedge{q}_{\perp}-\left[1-\frac{\sin(2|q_{v}|)}{2|q_{v}|}\right] q_{\perp}\label{der1}\\
&=\dot{q}_{0}+q_{1}\frac{q_{v}}{|q_{v}|}
+\frac{\sin(2|q_{v}|)}{2|q_{v}|} q_{\perp}-\frac{1-\cos(2|q_{v}|)}{2|q_{v}|}\frac{q_{v}}{|q_{v}|}\wedge{q}_{\perp}.\label{der2}
\end{align}

Now, exactly as we have done before, this last relation extends to the complexification of $\H$, where
in place of a curve $q:[0,1]\to\H$ we consider a complex curve $F:D\subset\C\to\C\otimes\H$, the derivative with respect to $t$ is changed into the derivative with respect to $z\in D$ and
the usual exponential function is changed into $\varepsilon$. After these modifications we have the following formula.

\begin{proposition}
Let $f:U\to\H$ be a slice regular function. Then we have the following formula
\begin{align*}
\exp_{*}(f)^{-*}*\partial_{c}(\exp_{*}(f))=&\partial_{c}f+\left[\sum_{h=1}^{\infty}\frac{(-1)^{h-1}2^{2h}}{(2h+1)!}(f_{v}^{s})^{(h-1)}\right][\langle f_{v},(\partial_{c}f)_{v}\rangle_{*} f_{v}-f_{v}^{s}(\partial_{c}f)_{v}]+\\
&-\left[\sum_{h=1}^{\infty}\frac{(-1)^{h-1}2^{2h-1}}{(2h)!}(f_{v}^{s})^{(h-1)}\right]f_{v}\pv(\partial_{c}f)_{v}.
\end{align*}
\end{proposition}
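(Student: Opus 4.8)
The plan is to transplant the matrix-level identity \eqref{derivative} — and its quaternionic specialization computed just above — to the complexified algebra $\C\otimes\H$, and then read off the statement at the level of stem functions, hence slice regular functions. Since $\exp_*(f) = \mathcal{I}(\varepsilon \circ F)$ when $f = \mathcal{I}(F)$, and $\partial_c$ corresponds to $\partial/\partial z$ on stem functions, the slice derivative of $\exp_*(f)$ is induced by the stem function $\partial_z(\varepsilon\circ F)$. So it suffices to prove the corresponding formula for the complex curve $F : D \subset \C \to \C\otimes\H$ in place of $q : [0,1] \to \H$, with $\partial_z$ in place of $d/dt$ and $\varepsilon$ in place of $\exp$.

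First I would record why the passage $\H \rightsquigarrow \C\otimes\H$ is legitimate: the series \eqref{derivative} is a formal identity in any associative algebra (it is the standard $e^{-X}\,\mathrm{d}e^X = \frac{1-e^{-\mathrm{ad}_X}}{\mathrm{ad}_X}(\mathrm{d}X)$), and the intermediate quaternionic computation used only the two algebraic facts $[p,q] = 2\,p_v \wedge q_v$ and $q_v^2 = -|q_v|^2$. Both have \emph{verbatim} formal analogues in $\C\otimes\H$: the bracket satisfies $[z,w] = 2\,\vecpart{z}\pv\vecpart{w}$ (this is exactly the identity $f_v \pv g_v = [f,g]/2$ recorded in the Preliminaries, now read at the level of stem functions), and $\vecpart{z}^2 = -\vecnorm{z}$. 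Thus the entire telescoping computation of $[q^{(n)},\dot q]$ and the resummation of the two power series — one giving $\sum \frac{(-1)^{h-1}2^{2h-1}}{(2h)!}(\vecnorm{z})^{h-1}$, the other $\sum \frac{(-1)^{h-1}2^{2h}}{(2h+1)!}(\vecnorm{z})^{h-1}$ — goes through unchanged, now as an identity of convergent power series in $\vecnorm{z}\in\C$ (they converge for all $z$ since they are entire, being $\frac{1-\cos 2\sqrt{w}}{2\sqrt w}\cdot\frac{1}{\sqrt w}$-type even functions of $\sqrt{\vecnorm{z}}$). Substituting $z = F(w)$, $\dot z = \partial_z F(w)$ and translating back through $\mathcal{I}$, the coefficient $\langle \cdot,\cdot\rangle$ becomes $\langle\cdot,\cdot\rangle_*$ and $\wedge$ becomes $\pv$, which is precisely the asserted formula; the $|q_v|^{2(h-1)}$ powers become $(f_v^s)^{(h-1)}$ since $f_v^s = \mathcal{I}(\vecnorm{F})$.

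There is, however, a genuine point that needs care, and I expect it to be \textbf{the main obstacle}: the intermediate quaternionic formulas were written in terms of $|q_v| = \sqrt{|q_v|^2}$ and the unit vector $q_v/|q_v|$, i.e. they used a square root that need not exist (single-valuedly) in $\C\otimes\H$, and indeed $\sqrt{f_v^s}$ need not be globally defined — exactly the kind of subtlety the paper has been navigating via $V_{-1}$, $V_\infty$ and the cover $\rho$. The point is that in the \emph{final} resummed expression the square roots have disappeared: $\frac{\sin^2|q_v|}{|q_v|^2}$ and $\frac{|q_v|-\cos|q_v|\sin|q_v|}{|q_v|^3}$ are each even in $|q_v|$, hence are honest \emph{entire} functions of $|q_v|^2$, and likewise the operators $\langle q_v,\dot q_v\rangle q_v - |q_v|^2\dot q_v$ and $q_v\wedge\dot q_v$ are polynomial in $q_v$ (no normalization needed). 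So the clean way to organize the proof is to never introduce $\sqrt{\vecnorm{z}}$ at all: work directly with the two entire power series in the variable $\vecnorm{z}$ displayed in the Proposition, verify the telescoping recursion $[z^{(2h+1)},\dot z] = 2^2(-1)\vecnorm{z}\,[z^{(2h-1)},\dot z]$ and the base cases $[z,\dot z] = 2\,\vecpart{z}\pv\dot z_v$, $[z^{(2)},\dot z] = 4(\langle\vecpart{z},\dot z_v\rangle\vecpart{z} - \vecnorm{z}\,\dot z_v)$ purely algebraically, and sum. I would present the argument in this square-root-free form, relegating the heuristic derivation via $|q_v|$ (already given in the text) to motivation; then the transfer to stem functions is immediate because all operations involved — $\partial_z$, the bracket $[\cdot,\cdot]$, $\langle\cdot,\cdot\rangle_*$, $\pv$, multiplication by the slice-preserving function $f_v^s$ — are exactly the ones that intertwine $\mathcal{I}$ with its $\C\otimes\H$-counterpart, and $\exp_*(f)^{-*}$ makes sense precisely because $\exp_*(f)$ is never a zero divisor (its symmetrization equals $\mathcal{I}(\varepsilon(F)\varepsilon(F)^c) = \exp(2f_0)\neq 0$, using \eqref{formula1}).
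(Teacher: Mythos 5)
Your proposal follows essentially the same route as the paper: derive the identity for a quaternionic curve from the BCH-type formula $e^{-X}\,\mathrm{d}e^{X}$ via the recursion for the iterated brackets $[q^{(n)},\dot q]$, resum the two series, and then transfer the resulting formal identity to $\C\otimes\H$ and hence to stem functions, which is why the paper states the proposition in the square-root-free power-series form. Your explicit remark that the final expression involves only entire functions of $\vecnorm{z}$ (so no branch of $\sqrt{f_v^s}$ is needed) is a point the paper leaves implicit, but it is the same argument.
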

From this proposition it is possible to derive some convenient corollaries.

\begin{corollary}
Let $f:U\to\H$ be a slice regular function and let $q_{0}\in U$ be any point. If $f_{v}^{s}(q_{0})=0$, then
$$
\Big(\exp_{*}(f)^{-*}*\partial_{c}(\exp_{*}(f))\Big)(q_{0})=(\partial_{c}f)(q_{0})-(f_{v}\pv(\partial_{c}f)_{v})(q_{0})+\frac{2}{3}\Big(\langle f_{v},(\partial_{c}f)_{v}\rangle_{*}\Big)(q_{0}) f_{v}(q_{0})
$$

\end{corollary}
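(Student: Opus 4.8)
The plan is to specialize the formula from the preceding Proposition to a point $q_0$ where $f_v^s(q_0)=0$. First I would recall the exact statement: for a slice regular $f:U\to\H$,
\[
\exp_{*}(f)^{-*}*\partial_{c}(\exp_{*}(f))=\partial_{c}f+\left[\sum_{h\ge1}\frac{(-1)^{h-1}2^{2h}}{(2h+1)!}(f_{v}^{s})^{(h-1)}\right]\!\big[\langle f_{v},(\partial_{c}f)_{v}\rangle_{*} f_{v}-f_{v}^{s}(\partial_{c}f)_{v}\big]-\left[\sum_{h\ge1}\frac{(-1)^{h-1}2^{2h-1}}{(2h)!}(f_{v}^{s})^{(h-1)}\right]f_{v}\pv(\partial_{c}f)_{v},
\]
where the notation $(f_v^s)^{(h-1)}$ means the $(h-1)$-th $*$-power (equivalently, pointwise power, since $f_v^s$ is slice preserving). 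Since $f_v^s$ is a slice preserving function, evaluating the two bracketed series at $q_0$ is simply a matter of substituting the scalar value $f_v^s(q_0)=0$ into the power series $\sum_{h\ge1}\frac{(-1)^{h-1}2^{2h}}{(2h+1)!}x^{h-1}$ and $\sum_{h\ge1}\frac{(-1)^{h-1}2^{2h-1}}{(2h)!}x^{h-1}$ respectively: only the $h=1$ term survives, giving the constants $\frac{2^2}{3!}=\frac{2}{3}$ and $\frac{2^1}{2!}=1$.

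Next I would carry out the substitution term by term. The leading term $\partial_c f$ is unaffected. In the first bracket the surviving contribution is $\frac23\big[\langle f_v,(\partial_c f)_v\rangle_* f_v - f_v^s(\partial_c f)_v\big]$ evaluated at $q_0$; but $f_v^s(q_0)=0$ kills the second summand, leaving $\frac23\big(\langle f_v,(\partial_c f)_v\rangle_*\big)(q_0)\,f_v(q_0)$. In the second bracket the surviving contribution is $-1\cdot\big(f_v\pv(\partial_c f)_v\big)(q_0)$. Collecting these three pieces reproduces exactly the claimed formula
\[
\Big(\exp_{*}(f)^{-*}*\partial_{c}(\exp_{*}(f))\Big)(q_{0})=(\partial_{c}f)(q_{0})-(f_{v}\pv(\partial_{c}f)_{v})(q_{0})+\tfrac{2}{3}\Big(\langle f_{v},(\partial_{c}f)_{v}\rangle_{*}\Big)(q_{0})\, f_{v}(q_{0}).
\]

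The only point that deserves genuine care, and which I regard as the main (minor) obstacle, is justifying that the two infinite series in the Proposition can indeed be evaluated pointwise at $q_0$ by substituting the scalar $f_v^s(q_0)$ — i.e. that the $*$-product with the convergent power series in $f_v^s$ commutes with evaluation. This is legitimate because $f_v^s$ is a slice preserving function (its stem takes the form $(F_0^2+\underline{F}^2,0,0,0)$), so each $*$-power $(f_v^s)^{*(h-1)}$ is just the ordinary $(h-1)$-th power of the complex-valued stem, and a $*$-product of a slice preserving function with any slice function is the pointwise product; the series converges locally uniformly on $U$ since it is the composition of an entire function with the slice regular function $f_v^s$. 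Hence evaluation at $q_0$ and the (absolutely convergent) summation may be interchanged, and with $f_v^s(q_0)=0$ every term with $h\ge2$ vanishes. I would state this interchange explicitly in one sentence and otherwise present the computation as the short substitution it is.
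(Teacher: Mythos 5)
Your proposal is correct and follows exactly the route the paper intends: the corollary is obtained by evaluating the Proposition at $q_{0}$, where $f_{v}^{s}(q_{0})=0$ reduces each series to its $h=1$ term (with coefficients $\tfrac{2^{2}}{3!}=\tfrac{2}{3}$ and $\tfrac{2^{1}}{2!}=1$) and kills the $f_{v}^{s}(\partial_{c}f)_{v}$ summand. Your extra remark justifying the pointwise evaluation of the slice-preserving power series is a sound, if unstated in the paper, observation.
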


%

In the case when $f_{v}^{s}$ is never-vanishing, many equivalent formulas can be derived.
\begin{corollary}
Let $f:U\to\H$ be a slice regular function such that $f_{v}^{s}$ is never-vanishing, then we have
\begin{align*}
\partial_{c}(\exp_{*}(f))=\exp_{*}(f)*&\left\{\partial_{c}f+\left[1-\frac{\sin(2\sqrt{f_{v}^{s}})}{2\sqrt{f_{v}^{s}}}\right]\left[\left\langle\frac{f_{v}}{\sqrt{f_{v}^{s}}},(\partial_{c}f)_{v}\right\rangle\frac{f_{v}}{\sqrt{f_{v}^{s}}}-(\partial_{c}f)_{v} \right]+
\right.\\
&\left.
-\frac{1-\cos(2\sqrt{f_{v}^{s}})}{2\sqrt{f_{v}^{s}}}\frac{f_{v}}{\sqrt{f_{v}^{s}}}\pv(\partial_{c}f)_{v}
\right\}
\end{align*}

\end{corollary}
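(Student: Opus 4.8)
The plan is to obtain this corollary as a direct consequence of the Proposition, by evaluating in closed form the two scalar power series that appear there and then rearranging. First I would record that, since $f_{v}^{s}$ is never-vanishing, the slice preserving function $\sqrt{f_{v}^{s}}$ is well defined (cf. the discussion following the $\mathcal{J}$-example and \cite[Corollary 3.2]{AltavillaPAMS}), and that the combinations
$$\frac{\sin(2\sqrt{f_{v}^{s}})}{2\sqrt{f_{v}^{s}}},\qquad \frac{1-\cos(2\sqrt{f_{v}^{s}})}{2f_{v}^{s}},\qquad \frac{1-\cos(2\sqrt{f_{v}^{s}})}{2\sqrt{f_{v}^{s}}}\cdot\frac{f_{v}}{\sqrt{f_{v}^{s}}}$$
are even in the chosen square root, hence branch-independent and globally defined on $U$.

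Next I would sum the series. Writing $x=f_{v}^{s}$ and using the Taylor expansions of $\cos$ and $\sin$, one checks the elementary identities
\begin{align*}
\sum_{h=1}^{\infty}\frac{(-1)^{h-1}2^{2h-1}}{(2h)!}x^{h-1}&=\frac{1-\cos(2\sqrt{x})}{2x},\\
\sum_{h=1}^{\infty}\frac{(-1)^{h-1}2^{2h}}{(2h+1)!}x^{h-1}&=\frac{1}{x}\left(1-\frac{\sin(2\sqrt{x})}{2\sqrt{x}}\right),
\end{align*}
which are exactly the sums already performed in the quaternionic computation preceding the Proposition, now read with $|q_{v}|^{2}$ replaced by $f_{v}^{s}$. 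Since these series converge locally uniformly and left $*$-multiplication by $\exp_{*}(f)$ is continuous, substituting the closed forms into the statement of the Proposition is legitimate.

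It then remains to rearrange. Using bilinearity of $\langle\cdot,\cdot\rangle_{*}$ and of $\pv$ over the ring of slice preserving functions, I would rewrite
$$\frac{1}{f_{v}^{s}}\big[\langle f_{v},(\partial_{c}f)_{v}\rangle_{*}f_{v}-f_{v}^{s}(\partial_{c}f)_{v}\big]=\Big\langle\frac{f_{v}}{\sqrt{f_{v}^{s}}},(\partial_{c}f)_{v}\Big\rangle\frac{f_{v}}{\sqrt{f_{v}^{s}}}-(\partial_{c}f)_{v},\qquad \frac{1}{f_{v}^{s}}\,f_{v}\pv(\partial_{c}f)_{v}=\frac{1}{\sqrt{f_{v}^{s}}}\,\frac{f_{v}}{\sqrt{f_{v}^{s}}}\pv(\partial_{c}f)_{v};$$
collecting the coefficients $1-\frac{\sin(2\sqrt{f_{v}^{s}})}{2\sqrt{f_{v}^{s}}}$ and $\frac{1-\cos(2\sqrt{f_{v}^{s}})}{2\sqrt{f_{v}^{s}}}$ produces precisely the expression in braces. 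Finally, since $f$ $*$-commutes with itself, $\exp_{*}(f)$ is $*$-invertible with $\exp_{*}(f)^{-*}=\exp_{*}(-f)$, so left $*$-multiplying the identity of the Proposition by $\exp_{*}(f)$ yields the claimed formula. The argument is essentially bookkeeping: the only place the hypothesis is genuinely used is the well-definedness and branch-independence of $\sqrt{f_{v}^{s}}$ and of the transcendental functions built from it, and I do not anticipate any obstacle beyond tracking the normalizing factors $\sqrt{f_{v}^{s}}$ when passing between $f_{v}$ and $f_{v}/\sqrt{f_{v}^{s}}$.
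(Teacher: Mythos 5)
Your proposal is correct and follows essentially the same route the paper intends: the corollary is obtained from the preceding Proposition by summing the two scalar series into the closed forms $\frac{1-\cos(2\sqrt{x})}{2x}$ and $\frac{1}{x}\bigl(1-\frac{\sin(2\sqrt{x})}{2\sqrt{x}}\bigr)$ (exactly the sums already carried out in the quaternionic computation with $x=|q_v|^2$), normalizing by $\sqrt{f_v^s}$, and left $*$-multiplying by $\exp_*(f)$. Your added remarks on branch-independence of the even combinations and on $\exp_*(f)^{-*}=\exp_*(-f)$ are accurate and fill in details the paper leaves implicit.
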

As said before, the formula contained in the last corollary is just one of the possible generalizations we have seen
in this section. With the same spirit, it is clearly possible to generalize Formula~\eqref{der1} or~\eqref{der2}.

\begin{remark}
Many of the previous formulas can also be related to the function $\nu:\H\to\H$ introduced in~\cite[Definition 2.16]{altavillaLOG} as
$$
\nu(q)=\sum_{m\in\N}\frac{(-1)^{m}q^{2m+1}}{(2m+1)!},
$$
and noticing that $\nu(q^{2})q=\sin(q)$.
\end{remark}

\begin{remark}
Exactly as in the case of a quaternionic curve, even in this case the formula for the slice derivative of
the $*$-exponential of a slice regular function simplifies to  the usual one when $(\partial_{c}f)_{v}$
and $f_{v}$ commute, i.e., getting rid of the trivial cases, when there exists a slice preserving function $\gamma$
such that 
$$
(\partial_{c}f)_{v}=\gamma f_{v}.
$$
Examples of functions satisfying this relations are slice constant functions (i.e. functions with everywhere vanishing 
slice derivative), $\C_{I}$-preserving function (for any $I\in\sfera$) or functions of the form
$f_{v}=\exp(\gamma(q)q)c$, where $c$ is any purely imaginary quaternion.
\end{remark}

\bibliographystyle{plain}
\bibliography{articolo_numero_due}{}

\end{document}